\documentclass[12pt, a4paper]{article}
\usepackage[english]{babel}

\usepackage{upgreek}
\usepackage{commath}
\usepackage{amsmath,amssymb}
\usepackage{amsthm}
\usepackage{booktabs}
\usepackage{bookmark}
\usepackage{xcolor}
\usepackage{afterpage}
\usepackage[thinlines]{easytable}
\usepackage{array}
\usepackage{mathtools} 
\usepackage{fancyhdr}
\usepackage{graphicx}
\usepackage{enumitem} 
\usepackage{cite}
\usepackage{mathrsfs}
\usepackage[export]{adjustbox}
\usepackage{setspace}
\usepackage{emptypage}
\usepackage[margin=2.8cm]{geometry} 
\graphicspath{ {./images/} }
\usepackage[utf8]{inputenc}
\usepackage{epigraph}
\usepackage[nottoc,numbib]{tocbibind} 
\usepackage{sectsty}
\usepackage{thmtools}
\usepackage{theoremref}

\usepackage[numbers,sort&compress]{natbib}
\bibpunct[, ]{[}{]}{,}{n}{,}{,}

\theoremstyle{plain}
\newtheorem{theorem}{Theorem}
\newtheorem{lemma}{Lemma}
\newtheorem{corollary}{Corollary}
\newtheorem{proposition}{Proposition}

\theoremstyle{definition}

\newtheorem{example}{Example}

\theoremstyle{remark}

\newtheoremstyle{colon}%
{}
{}
{\itshape}
{}
{\bfseries}
{:}
{ }
{}

\theoremstyle{colon}
\newtheorem*{indeed}{Indeed}

\usepackage{setspace}
\providecommand{\keywords}[1]{\textbf{{Keywords: }} #1}

\begin{document}
	\title{Note on the Weak Convergence of Hyperplane $\alpha$-Quantile Functionals and Their Continuity in the Skorokhod J1 Topology \footnote{This paper was originally published in Sparago, P. \textit{"Note on the Weak Convergence of Hyperplane $\alpha$-Quantile Functionals and Their Continuity in the Skorokhod J1 Topology"}, Journal of Theoretical Probability, Vol. 38,  17 (2025) \url{https://doi.org/10.1007/s10959-024-01390-w}. This version contains minor corrections to: (i) the statement of Theorem \ref{weak_convergence_BM_alpha_quantile_time} (nondegeneracy, already assumed throughout in the context of Brownian paths, here added for clarity); (ii) a typo in the statement of Theorem \ref{stopping_quantile_continuity} (in the $\sup,\inf$ arguments); (iii) the statements of Proposition \ref{BM_joint_continuity_occupation_time},  Lemma \ref{BM_as_nonzero_alpha_quantile}, and Proposition  \ref{BM_continuity_random_times} (ruling out the trivial case $\gamma =0$).}}
	
	\author{{Pietro Maria Sparago$^1$}
	}
	
	\date{%
		$^1$London School of Economics\\
		\today
	}
	
	\maketitle
	
	\begin{abstract}
		The $\alpha$-quantile $M_{t,\alpha}$ of a stochastic process has been introduced in \cite{Miura} and important distributional results have been derived in \cite{Akahori}, \cite{Dassios_quantileBM} and \cite{Yor_quantileBM}, with special attention given to the problem of pricing $\alpha$-quantile options. We straightforwardly extend the classical monodimensional setting to $\mathbb{R}^d$ by introducing the hyperplane $\alpha$-quantile and we find an explicit functional continuity set of the $\alpha$-quantile as a functional mapping $\mathbb{R}^d$-valued càdlàg functions to $\mathbb{R}$. This specification allows us to use continuous mapping and assert that if a $\mathbb{R}^d$-valued càdlàg stochastic process $X$ a.s. belongs to such continuity set, then $X^n\Rightarrow X$ (i.e. weakly in the Skorokhod sense) implies $M_{t,\alpha}(X^n)\to^\textnormal{w}M_{t,\alpha}(X)$ (i.e. weakly) in the usual sense. We further the discussion by considering the conditions for convergence of a 'random time' functional of $M_{t,\alpha}$, the first time at which the $\alpha$-quantile has been hit, applied to sequences of càdlàg functions converging in the Skorokhod topology. The Brownian distribution of this functional is studied e.g. in \cite{Chaumont_quantilestopping} and \cite{Dassios_quantileBMstopping}. We finally prove the fact that if the limit process of a sequence of càdlàg stochastic processes is a multidimensional Brownian motion with nontrivial covariance structure, such random time functional applied to the sequence of processes converges - jointly with the $\alpha$-quantile - weakly in the usual sense.
	\end{abstract}

	\keywords{Quantiles of Brownian motion ; Skorokhod space ; hitting times ; weak convergence}
	
\section{Introduction}

The set $\mathcal{D}_{\mathbb{R}^d}[0,\infty)$ is the Skorokhod space of $\mathbb{R}^d$-valued càdlàg functions, which are equivalently denoted with $x=(x_t)_{t \geq 0}=((x_{1,t},...,x_{d,t}))_{t \geq 0}$ unless otherwise specified. This set is equipped with the topology J1 induced by its specification of convergence of sequences (see e.g. (\cite{JacodShiryaev}; VI.1.14. p. 328)); if $d=1$ we write $\mathcal{D}[0,\infty)$ for simplicity. We denote with $\|.\|$ the Euclidean norm in $\mathbb{R}^d,d>1$ and with $|.|$ the absolute value. We denote with $x\cdot y=x'y$ the dot product in $\mathbb{R}^d$. For clarity or ease of notation, the Lebesgue measure on $\mathbb{R}$ will be identified with either $\lambda(dt)$ or $dt$. The Skorokhod space is a Polish space, and there is a metric $\delta$ such that $\delta(x^n,x)\to 0$ if and only if $x^n\to x$ in the Skorokhod topology. \textcolor{black}{
	Following the argument of Prokhorov as in (\cite{JacodShiryaev}; VI.\S1c p. 329), $\delta$ is given as following. First, for each $N \in \mathbb{N}$ define the function $k_N$ by $k_N(t):=1$ if $t\leq N$, $k_N(t):=N+1-t$ if $N<t<N+1$ and $k_N(t):=0$ if $t\geq N+1$. Then, for all $\uplambda:[0,\infty)\to [0,\infty)$ continuous strictly increasing with $\uplambda_0=0$ and $\uplambda_t\uparrow \infty$ as $t\uparrow \infty$ set $|||\uplambda|||:=\sup_{s<t}|\textrm{Log}\frac{\uplambda_t-\uplambda_s}{t-s}|$. Finally, for $x,x'\in \mathcal{D}_{\mathbb{R}^d}[0,\infty)$ set
	$$\begin{aligned}
	\delta_N(x,x')&:=\inf_{\uplambda}(|||\uplambda |||+\|(k_Nx)\circ \uplambda-k_Nx'\|_\infty)\\
	\delta(x,x')&:=\sum_{N\in \mathbb{N}}2^{-N}(1 \wedge \delta_N(x,x'))\\
	\end{aligned}$$
	where $\|.\|_\infty$ is the metric of (\cite{JacodShiryaev}; VI.1.2 p. 325) and $k_Nx$ is the product of $k_N$ and $x\in \mathcal{D}_{\mathbb{R}^d}[0,\infty)$.} We denote the jumps of càdlàg functions with $\Delta x_t:=x_t-x_{t^-}$. The set $C_{\mathbb{R}^d}[0,\infty)\subset \mathcal{D}_{\mathbb{R}^d}[0,\infty)$ is the space of $\mathbb{R}^d$-valued continuous functions. Given an abstract probability space $(\Omega,\mathscr{F},P)$, a càdlàg stochastic process is identified with the measurable function $X:(\Omega,\mathscr{F}) \to (\mathcal{D}_{\mathbb{R}^d}[0,\infty),\mathscr{D}_{\mathbb{R}^d}[0,\infty))$ where $\mathscr{D}_{\mathbb{R}^d}[0,\infty)$ are the Borel sets. If $X\sim \sigma W$ where $W$ is a one-dimensional standard Brownian motion and $\sigma>0$, then we will say that  $X$ is a Brownian motion with volatility parameter $\sigma$. We will also consider sequences of càdlàg stochastic processes $\{X^n=(X_t^n)_{t \geq 0},n \in \mathbb{N}\}$ each defined on its own stochastic basis $(\Omega^n,\mathscr{F}^n,(\mathscr{F}_t^n)_{t \geq 0},P^n)$ which may not be the same as that of $X$. We say that $X^n\Rightarrow X$ if $E^{P^n}[f(X^n)]\to E^{P}[f(X)]$ as $n \to \infty$ for all $f:\mathcal{D}_{\mathbb{R}^d}[0,\infty)\to \mathbb{R}$ bounded and continuous in the Skorokhod topology. Given a time horizon $t>0$ and some $\alpha\in (0,1)$ we define the hyperplane $\alpha$-quantile
\begin{equation}
\label{alpha_quantile}
M_{t,\alpha}(X)=\inf\bigg\{y:\frac{1}{t}\int_0^t\mathbf{1}_{\{z\in \mathbb{R}^d:\gamma \cdot z \leq y\}}(X_s)ds\geq \alpha\bigg\}
\end{equation}
for some fixed $\gamma \in \mathbb{R}^d$ and a càdlàg stochastic process $X$. We denote with $\tau_{M_{t,\alpha}}(X)$ the random time at which $X$ has hit $M_{t,\alpha}(X)$. \textcolor{black}{
	We shall give the precise definitions of these functionals respectively in Lemma \ref{alpha_quantile_properties} and Theorem \ref{stopping_quantile_continuity}}. Our approach is to consider the $\alpha$-quantile and its random time as functionals on $\mathcal{D}_{\mathbb{R}^d}[0,\infty)$, and we study their continuity properties in the Skorokhod topology. Quoting (\cite{JacodShiryaev}, VI p. 337):\\

\textit{''It is important to decide whether a function defined on} $\mathcal{D}_{\mathbb{R}^d}[0,\infty)$ {\textit{is continuous for the Skorokhod topology. As a matter of fact, not very many functions are so, hence the question becomes: at which points of}} $\mathcal{D}_{\mathbb{R}^d}[0,\infty)$ {\textit{is a given function continuous?''}}\\

To address this matter, we shall first characterise the (joint) continuity set of $x\mapsto (M_{T,\alpha}(x),\tau_{M_{T,\alpha}}(x))$ in the Skorokhod topology. Our main result is then the following; its corollary is an immediate application to obtain a means convergence result for $\alpha$-quantile option payoffs.

\begin{theorem} 
	\label{weak_convergence_BM_alpha_quantile_time}
	Let $\alpha \in (0,1),T>0,\gamma \in \mathbb{R}^d$. Let $X=\Sigma W$ where $W$ is a standard $\mathbb{R}^d$-Brownian motion and $\Sigma$ is such that $\Sigma \Sigma'$ is a (nondegenerate) variance-covariance matrix. Then, $X$ belongs with probability one to the continuity set of $\mathcal{D}_{\mathbb{R}^d}[0,\infty)\ni x\mapsto (M_{T,\alpha}(x),\tau_{M_{T,\alpha}}(x))$ in the Skorokhod topology. Therefore: suppose $X^n\Rightarrow X$, then $(M_{T,\alpha}(X^n),\tau_{M_{T,\alpha}}(X^n))\to^{\textnormal{w}} (M_{T,\alpha}(X),\tau_{M_{T,\alpha}}(X))$ in the usual sense.
\end{theorem}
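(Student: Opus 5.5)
The plan is to reduce everything to one-dimensional statements about $Y:=\gamma\cdot X$, and then apply the continuous mapping theorem. Here $\gamma\neq 0$, the trivial case $\gamma=0$ being excluded as in the corrections. Since $\Sigma\Sigma'$ is nondegenerate, $\gamma'\Sigma\Sigma'\gamma>0$, so $Y$ is a one-dimensional Brownian motion with volatility $\sigma=(\gamma'\Sigma\Sigma'\gamma)^{1/2}>0$. The map $x\mapsto\gamma\cdot x$ is Skorokhod continuous from $\mathcal{D}_{\mathbb{R}^d}[0,\infty)$ to $\mathcal{D}[0,\infty)$: a common time change works for all coordinates. Moreover, $M_{T,\alpha}(x)$ and $\tau_{M_{T,\alpha}}(x)$ depend on $x$ only through $\gamma\cdot x$. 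Hence it suffices to exhibit a set $A\subset\mathcal{D}[0,\infty)$ on which the one-dimensional pair $(M_{T,\alpha},\tau_{M_{T,\alpha}})$ is continuous and to show $P(Y\in A)=1$.

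For continuity of $M_{T,\alpha}$, I would use the characterisation of the continuity set from Lemma~\ref{alpha_quantile_properties} and the surrounding results. If $y^n\to y$ in J1 with $y$ continuous, then the convergence is locally uniform. Hence the occupation distribution functions $F_n(u)=\frac1T\int_0^T\mathbf{1}_{\{y^n_s\le u\}}ds$ converge to $F(u)$ at every $u$ with $\lambda(\{s\le T: y_s=u\})=0$. The quantile $\inf\{u:F_n(u)\ge\alpha\}$ then converges provided $F$ is continuous and strictly increasing at its $\alpha$-level. For Brownian $Y$, the occupation measure has a continuous local-time density that is a.s. positive on the open range $(\min_{[0,T]}Y,\max_{[0,T]}Y)$. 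This gives both a continuous $F$ (no level sets of positive measure) and strict increase, which is the content of Proposition~\ref{BM_joint_continuity_occupation_time}. So $M_{T,\alpha}$ is continuous at $Y$ a.s.

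For $\tau_{M_{T,\alpha}}$, defined as in Theorem~\ref{stopping_quantile_continuity} as a first hitting time of the level $M_{T,\alpha}(x)$, the obstacle is the familiar one for hitting times. They are continuous at $y$ only if $y$ crosses the level immediately after first touching it, i.e. for every $\epsilon>0$, $\sup_{s\le\tau+\epsilon}y_s>m$ and $\inf_{s\le\tau+\epsilon}y_s<m$ in the appropriate sense, with $m=M_{T,\alpha}(y)$. Given this and the convergence $m_n\to m$, a standard sandwich argument gives $\tau(y^n)\to\tau(y)$. The lower bound uses that $y$ stays strictly on one side of $m$ before $\tau$ on compacts, with $y_0=0$ and $m\neq 0$. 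The upper bound uses the crossing just after $\tau$ together with uniform convergence. Joint continuity of the pair then follows immediately. It remains to show $Y$ satisfies the crossing condition a.s. at the random level $m=M_{T,\alpha}(Y)$. That level is not a stopping-time-measurable quantity, so the strong Markov property cannot be applied directly.

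This is the main obstacle, and I would handle it through Lemma~\ref{BM_as_nonzero_alpha_quantile} and Proposition~\ref{BM_continuity_random_times}. First, $M_{T,\alpha}(Y)\neq 0$ a.s., since its law is absolutely continuous by the known distributional results (Akahori, Dassios). Second, a.s. simultaneously for all levels $u\neq 0$, the first hitting time $T_u$ of $u$ is followed by immediate crossing. To obtain this uniformity, I would restrict to rational levels $q$, using the strong Markov property at $T_q$ and the law of the iterated logarithm. Monotonicity and continuity of paths handle the remaining levels: by the LIL, the set of "bad" levels $u$ (those at which $Y$ touches without crossing at first passage) corresponds to jump times of the running maximum process $u\mapsto T_u$, which is a subordinator. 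The claim then reduces to showing that $m$ a.s. avoids a countable random set whose members are determined only by the path. I would argue this by showing that $m$ is a.s. not a local extremum value at first passage. This uses the fact that a local-time-based quantile coinciding with a running maximum value at a jump of $T_u$ has probability zero, since the pair (first-passage structure, occupation density) has a joint law with continuous marginals. This is precisely the delicate point, and if the direct argument fails I would fall back on the explicit joint distribution of $(M_{T,\alpha},\tau_{M_{T,\alpha}})$ from \cite{Chaumont_quantilestopping,Dassios_quantileBMstopping} to verify the crossing property. Finally, the continuous mapping theorem gives the stated weak convergence.
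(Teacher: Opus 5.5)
Your reduction to $Y=\gamma\cdot X$, the continuity of $M_{T,\alpha}$ at continuous paths, $P(M_{T,\alpha}(Y)=0)=0$, and the deterministic sandwich argument are all sound. Your ``immediate crossing'' condition is exactly condition (3) of Corollary~\ref{stopping_quantile_continuity_C}. Write $m:=M_{T,\alpha}(Y)>0$ and $H_u:=\inf\{t:Y_t\geq u\}$; the condition says that $u\mapsto H_u$ does not jump at $u=m$ (symmetrically for $m<0$).

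The gap is in the only genuinely probabilistic step: proving this almost surely at the random level $m$.

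\textbf{The reduction step.} The claim that, a.s.\ simultaneously for all $u\neq 0$, first passage is followed by immediate crossing is false. As you then concede, it fails on the countable, path-dependent set $J$ of jump levels of $u\mapsto H_u$. So the rational-level/LIL step only reduces the problem to showing $P(m\in J)=0$.

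\textbf{Your argument for $P(m\in J)=0$ is not valid.} A continuous law of $m$, or of any marginal of the pair you mention, only excludes deterministic countable sets, or sets independent of $m$. But $J$ is built from the same path. For example, $S:=\max_{s\leq T}Y_s$ has a density. Yet if $\theta<T$ is the a.s.\ unique time with $Y_\theta=S$, then $H_S=\theta<T\leq H_{S+}$, so $S\in J$ almost surely.

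\textbf{The fallback does not help either.} The joint law of $(M_{T,\alpha},\tau_{M_{T,\alpha}})$ at the single index $\alpha$ cannot detect the crossing property. That property concerns $\lim_{\beta\to\alpha}\tau_{M_{T,\beta}}(Y)$, i.e.\ how the quantities for different $\beta$ are coupled on one path.

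\textbf{Soft arguments give only part of the claim.} Since $\alpha\mapsto M_{T,\alpha}(Y)$ is strictly increasing, the set $\{\alpha:M_{T,\alpha}(Y)\in J\}$ is countable. Fubini then yields $P(M_{T,\alpha}(Y)\in J)=0$ for Lebesgue-a.e.\ $\alpha$, but not for the fixed $\alpha$ of the statement.

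\textbf{The missing idea.} The paper proves Proposition~\ref{BM_continuity_random_times} by combining pathwise monotonicity in $\beta$ with a quantitative input that is continuous at every $\alpha$. On $\{M_{T,\alpha}(Y)>0\}$ and for $\beta>\alpha$, one has $\tau_{M_{T,\beta}}\geq\tau_{M_{T,\alpha}}$. By Proposition~\ref{BM_joint_continuity_occupation_time}, $\tau_{M_{T,\beta}}\downarrow H_{m+}$ as $\beta\downarrow\alpha$. Markov's inequality then gives
\[
P\big(\{\tau_{M_{T,\beta}}>\tau_{M_{T,\alpha}}+\varepsilon\}\cap\{M_{T,\alpha}>0\}\big)\leq\varepsilon^{-1}\big(g(\beta)-g(\alpha)\big),\qquad g(\beta):=E\big[\mathbf{1}_{\{M_{T,\beta}>0\}}\tau_{M_{T,\beta}}\big].
\]
The explicit joint density in \cite{Dassios_quantileBMstopping} yields, for $T=1$ and unit volatility,
\[
g(\beta)=\frac{1}{\pi}\big(\arcsin\sqrt{\beta}-\sqrt{\beta(1-\beta)}\big),
\]
which is continuous. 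Hence $H_{m+}=H_m$ a.s.\ on $\{m>0\}$, and the case $m<0$ is symmetric with $\beta\uparrow\alpha$.

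Without this, or another argument giving continuity of $g$ at the given $\alpha$, your proposal does not establish that $Y$ lies a.s.\ in the continuity set of $\tau_{M_{T,\alpha}}$.
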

\begin{proof} Of course $X$ has a.s. continuous paths starting at zero with probability one. With probability one the conditions (1),(2),(3) of Corollary \ref{stopping_quantile_continuity_C} are satisfied by Lemma \ref{BM_as_nonzero_alpha_quantile}, Proposition \ref{BM_joint_continuity_occupation_time} and Proposition \ref{BM_continuity_random_times}. So we conclude by continuous mapping.
\end{proof}
\textcolor{black}{
	In the mathematical finance literature, functional central limit theorems with weak limit $X=\Sigma W$ emerge when considering the scaling limits of sequences of processes modelling asset prices. An important example is the scaling limit (of differences) of point processes with mutually exciting jump intensities of \cite{Bacry}. Another example is the small-time functional central limit theorem of \cite{Gerholdetal}. Theorem \ref{weak_convergence_BM_alpha_quantile_time} establishes continuous mapping in the case of the hyperplane $\alpha$-quantile jointly with its random time functional when applied to these sequences of processes. At the same time, the corollary of Theorem \ref{weak_convergence_BM_alpha_quantile_time} which we now are going to present also establishes the convergence of option prices when the underlying quantities are the $\alpha$-quantile and its random time functional, given that some technical conditions on $(X^n)_{n \in \mathbb{N}}$ are met. Given a payoff function $g:\mathbb{R}^2\to \mathbb{R}$ applied to $(M_{T,\alpha}(Y),\tau_{M_{T,\alpha}}(Y))$ of a càdlàg stochastic process $Y$, the standard pricing operation would consist in - setting the interest rate to zero for simplicity - computing the expectation $E^{P}[g(M_{T,\alpha}(Y),\tau_{M_{T,\alpha}}(Y))]$ under the appropriate probability measure. This pricing problem has been studied in particular in \cite{Dassios_quantileBMstopping}.}

\begin{corollary}
	\label{linear_growth_functional_convergence_alpha_quantile_time}	
	Let $X^n\Rightarrow X$ and $X^n$ are all martingales in their respective filtrations. Suppose that $f:\mathbb{R}\to \mathbb{R}$ satisfies the growth condition $|f(z)|^p\leq D(1+|z|^p)$ for some $p>1,D>0$. \textcolor{black}{
		Let $v>0$.} Define $g(m,u)=f(m)\mathbf{1}_{(v,\infty)}(u)$ and its continuity set $C(g)$. If 
	\begin{enumerate}
		\item $\sup_{n \in \mathbb{N}}E^{P^n}[|(\gamma \cdot X^n)_T|^p]<\infty$;
		\item $P((M_{T,\alpha}(X),\tau_{M_{T,\alpha}}(X))\in C(g))=1$;
	\end{enumerate}
	Then $E^{P^n}[f(M_{T,\alpha}(X^n))\mathbf{1}_{(v,\infty)}(\tau_{M_{T,\alpha}}(X^n))]\to E^P[f(M_{T,\alpha}(X))\mathbf{1}_{(v,\infty)}(\tau_{M_{T,\alpha}}(X))]$ as $n \to \infty$.
\end{corollary}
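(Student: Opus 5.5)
The plan is to combine the weak convergence of Theorem \ref{weak_convergence_BM_alpha_quantile_time} with a uniform integrability argument. Set $Z_n:=g(M_{T,\alpha}(X^n),\tau_{M_{T,\alpha}}(X^n))$ and $Z:=g(M_{T,\alpha}(X),\tau_{M_{T,\alpha}}(X))$.

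\textbf{Step 1: convergence in law.} Since $X^n\Rightarrow X$, Theorem \ref{weak_convergence_BM_alpha_quantile_time} gives $(M_{T,\alpha}(X^n),\tau_{M_{T,\alpha}}(X^n))\to^{\textnormal{w}}(M_{T,\alpha}(X),\tau_{M_{T,\alpha}}(X))$. This uses the standing assumption that $X=\Sigma W$. By condition (2), the limit vector lies a.s. in the continuity set $C(g)$. The continuous mapping theorem therefore yields $Z_n\to^{\textnormal{w}}Z$. Note that $f$ itself need not be continuous everywhere; condition (2) is exactly what handles this.

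\textbf{Step 2: bound $M_{T,\alpha}$ by the running supremum.} Write $Y^n:=\gamma\cdot X^n$. From the definition (\ref{alpha_quantile}), the quantile is a level $y$ between the minimum and maximum of the one-dimensional path $Y^n$ on $[0,T]$. Indeed, the occupation measure is a probability measure on $[0,T]$, so $\inf_{s\le T}Y^n_s\le M_{T,\alpha}(X^n)\le \sup_{s\le T}Y^n_s$. This uses the càdlàg property and the right-continuity of the distribution function $y\mapsto \frac1T\int_0^T\mathbf 1_{\{Y^n_s\le y\}}ds$. Hence
\[
|M_{T,\alpha}(X^n)|\le \sup_{s\le T}|Y^n_s|.
\]
Each $X^n$ is a martingale, so $Y^n$ is a real martingale in the same filtration. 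Doob's $L^p$ inequality with $p>1$ then gives
\[
E^{P^n}\Big[\sup_{s\le T}|Y^n_s|^p\Big]\le \Big(\tfrac{p}{p-1}\Big)^pE^{P^n}[|Y^n_T|^p].
\]
By condition (1), the right-hand side is bounded uniformly in $n$.

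\textbf{Step 3: uniform integrability.} The growth condition gives
\[
|Z_n|^p\le |f(M_{T,\alpha}(X^n))|^p\le D\big(1+|M_{T,\alpha}(X^n)|^p\big).
\]
Combined with Step 2, this yields $\sup_nE^{P^n}|Z_n|^p<\infty$. Since $p>1$, the family $\{Z_n\}$ is uniformly integrable, for instance via $E[|Z_n|\mathbf 1_{\{|Z_n|>K\}}]\le K^{1-p}\sup_nE|Z_n|^p$.

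\textbf{Step 4: conclusion.} The standard result that weak convergence plus uniform integrability implies convergence of means gives $E^{P^n}[Z_n]\to E^P[Z]$, with $Z$ integrable. This can be seen through Skorokhod representation and Vitali, or by truncating with $h_K(z)=(z\wedge K)\vee(-K)$.

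The main care point is Step 2. One must justify the sandwich bound on the quantile directly from the definition, including the case where the infimum in (\ref{alpha_quantile}) is attained at a jump level. One must also confirm that $Y^n$ is indeed a martingale, i.e. that integrability holds, which follows from condition (1) and the martingale assumption on $X^n$. If the components of $X^n$ are only martingales individually, linearity still suffices.
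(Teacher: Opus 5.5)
Your proposal is correct and follows essentially the same route as the paper. You use the weak convergence of the pair from Theorem \ref{weak_convergence_BM_alpha_quantile_time}, which implicitly requires $X=\Sigma W$, as you rightly note; then the sandwich bound of Lemma \ref{alpha_quantile_bounds} combined with Doob's $L^p$ inequality to get $\sup_n E^{P^n}[|Z_n|^p]<\infty$; then uniform integrability. The only cosmetic difference is the order: the paper first passes to a Skorokhod representation of the pair and applies a.e. continuous mapping with Vitali, whereas you apply continuous mapping first to get $Z_n\to^{\textnormal{w}}Z$ and then invoke the standard ``uniform integrability plus weak convergence'' result, which is equivalent.
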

\begin{proof} The claim follows - straightforwardly with a standard uniform integrability argument with the de la Vallée Poussin criterion - from Theorem \ref{weak_convergence_BM_alpha_quantile_time}, Lemma \ref{alpha_quantile_bounds}, Doob's $L^p$-inequality of càdlàg submartingales (see e.g. (\cite{KaratzasShreve}; 3.8.(iv) p. 13)), Skorokhod's representation theorem (see e.g. (\cite{Bogachev}; Vol. II, 8.5.4. p. 201)) and continuous mapping. \textcolor{black}{
		Indeed: since $\sup_{s\leq t}|\gamma \cdot x_s|\geq \sup_{s\leq t}(-\gamma \cdot x_s)=-\inf_{s\leq t}(\gamma \cdot x_s)$ and $\sup_{s\leq t}(\gamma \cdot x_s)\leq \sup_{s\leq t} |\gamma \cdot x_s|$, by following Lemma \ref{alpha_quantile_bounds} we have that $|M_{t,\alpha}(x)|^p\leq \sup_{s\leq t}|\gamma \cdot x_s|^p$ for all $x \in \mathcal{D}_{\mathbb{R}^d}[0,\infty)$. We have that $(\gamma \cdot X^n)_{n \in \mathbb{N}}$ are all martingales in the respective filtrations of $(X^n)_{n \in \mathbb{N}}$. Using the growth condition on $f$, then Lemma \ref{alpha_quantile_bounds} and Doob's $L^p$-inequality we obtain
		$$E^{P^n}[|f(M_{T,\alpha}(X^n))|^p\mathbf{1}_{(v,\infty)}(\tau_{M_{T,\alpha}}(X^n))]\leq D\bigg(1+\bigg(\frac{p}{p-1}\bigg)^pE^{P^n}[|(\gamma \cdot X^n)_T|^p]\bigg)$$
		so that $\sup_nE^{P^n}[|f(M_{T,\alpha}(X^n))|^p\mathbf{1}_{(v,\infty)}(\tau_{M_{T,\alpha}}(X^n))]<\infty$ by our first assumption. Since $(M_{T,\alpha}(X^n),\tau_{M_{T,\alpha}}(X^n))\to^{\textnormal{w}} (M_{T,\alpha}(X),\tau_{M_{T,\alpha}}(X))$, Skorokhod's representation allows us to pass to some $(\xi^{1,n},\xi^{2,n})\sim (M_{T,\alpha}(X^n),\tau_{M_{T,\alpha}}(X^n))$ and $(\xi^{1},\xi^{2})\sim (M_{T,\alpha}(X),\tau_{M_{T,\alpha}}(X))$ on the probability space $([0,1],\mathscr{B}[0,1],\lambda)$ such that $(\xi^{1,n},\xi^{2,n})\to^{\lambda\textnormal{-a.e.}}(\xi^{1},\xi^{2})$. Also,
		$$\sup_{n \in \mathbb{N}}E^{P^n}[|f(M_{T,\alpha}(X^n))|^p\mathbf{1}_{(v,\infty)}(\tau_{M_{T,\alpha}}(X^n))]=\sup_{n \in \mathbb{N}}\int_{[0,1]}|f(\xi^{1,n})|^p\mathbf{1}_{(v,\infty)}(\xi^{2,n})d\lambda$$
		and $1=P((M_{T,\alpha}(X),\tau_{M_{T,\alpha}}(X))\in C(g))=\lambda((\xi^{1},\xi^{2})\in C(g))$ by our second assumption, so that by a.e. continuous mapping, uniform integrability and Vitali's convergence theorem we obtain the claim.
	}
\end{proof}

\section{Hyperplane $\alpha$-quantiles and their random time functionals}

In this section we present our results on the continuity sets of the hyperplane $\alpha$-quantile and its random time functional defined as functions on $\mathcal{D}_{\mathbb{R}^d}[0,\infty)$. We shall begin by recalling some elementary results about generalized inverse functions, which we will make use of often.

\begin{proposition}{(see e.g. \cite{McNeiletal}; A.3. p. 641-642)}
	\label{McNeiletal}	
	If $x=(x_t)_{t\in \mathbb{R}}$ is $\mathbb{R}$-valued nondecreasing, not necessarily càdlàg, then the generalized inverse given by $x^{-1}(y):=\inf\{t:x_t\geq y\}$ is such that:
	\begin{enumerate}
		\item $x^{-1}$ is nondecreasing, left-continuous;
		\item $x$ is continuous if and only if $x^{-1}$ is strictly increasing;
		\item $x$ is strictly increasing if and only if $x^{-1}$ is continuous.
	\end{enumerate}
	If $x$ is also right-continuous it follows that $x_t \geq y\iff x^{-1}(y)\leq t$ (equivalently $x_t < y\iff x^{-1}(y)> t$). In fact, right-continuity is needed only for the implication $(\Leftarrow)$.
\end{proposition}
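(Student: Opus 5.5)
The plan is to prove the four claims of Proposition \ref{McNeiletal} directly from the definition $x^{-1}(y)=\inf\{t:x_t\geq y\}$, using only monotonicity of $x$ and the order structure of $\mathbb{R}$. Throughout we set $A_y:=\{t:x_t\geq y\}$. Since $x$ is nondecreasing, each $A_y$ is an up-set, that is an interval unbounded to the right (possibly empty or all of $\mathbb{R}$), with the usual conventions $\inf\emptyset=+\infty$ and $\inf\mathbb{R}=-\infty$.

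We first prove the final equivalence, because it is the tool for the rest.
\begin{itemize}
\item[$(\Rightarrow)$] If $x_t\geq y$, then $t\in A_y$, so $x^{-1}(y)\leq t$. No continuity is needed here.
\item[$(\Leftarrow)$] Suppose $x^{-1}(y)=s\leq t$. If $s<t$, there is $u\in A_y$ with $u<t$, so $x_t\geq x_u\geq y$. If $s=t$, pick $u_k\in A_y$ with $u_k\downarrow t$. Right-continuity gives $x_t=\lim x_{u_k}\geq y$. Only this case uses right-continuity, which justifies the remark in the statement.
\end{itemize}

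\textbf{Item 1.} Monotonicity of $x^{-1}$ is immediate: $y\leq y'$ implies $A_{y'}\subseteq A_y$. For left-continuity, let $y_k\uparrow y$. Then $x^{-1}(y_k)\uparrow L\leq x^{-1}(y)$, and we must show $L\geq x^{-1}(y)$. Suppose instead $L<x^{-1}(y)$ and take $t\in(L,x^{-1}(y))$. Then $t>x^{-1}(y_k)$ for every $k$, so some $u<t$ lies in $A_{y_k}$, giving $x_t\geq y_k$ for all $k$. Hence $x_t\geq y$, so $t\in A_y$ and $x^{-1}(y)\leq t$, a contradiction. This avoids assuming right-continuity.

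\textbf{Items 2 and 3.} These require care with the range of $y$. The natural domain is the interval between $\inf x$ and $\sup x$; otherwise $x^{-1}$ is identically $\pm\infty$ there, and the statements are understood on the relevant range, following the reference \cite{McNeiletal}.
\begin{itemize}
\item[Item 2.] Assume $x$ has a jump at $t_0$, i.e.\ $x_{t_0-}<x_{t_0+}$. Then for all $y\in(x_{t_0-},x_{t_0+})$, monotonicity places $A_y$ between $(t_0,\infty)$ and $[t_0,\infty)$, so $x^{-1}(y)=t_0$ is constant on an interval. Conversely, if $x^{-1}(y)=x^{-1}(y')=s$ with $y<y'$, then $x_t<y$ for $t<s$ and $x_t\geq y'$ for $t>s$. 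So $x$ jumps at $s$, and $x$ is not continuous.
\item[Item 3.] $x$ fails to be strictly increasing exactly when it is constant, equal to $c$, on some nondegenerate interval $[a,b]$. In that case $x^{-1}(c)\leq a$ while $x^{-1}(y)\geq b$ for every $y>c$, which is a jump. Conversely, a jump $x^{-1}(y_0+)>x^{-1}(y_0)$ forces $x_t\leq y_0$ on the gap $(x^{-1}(y_0),x^{-1}(y_0+))$. Combined with $x_t\geq y_0$ just after $x^{-1}(y_0)$, obtained by running the argument of Item 1 from the right, $x$ is constant equal to $y_0$ on that interval.
\end{itemize}

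The main obstacle is Item 3 and the implicit domain issues. Points where $x^{-1}$ takes infinite values, and the boundary behaviour of $x^{-1}(y_0+)$ without right-continuity of $x$, need careful treatment. I would handle them by restricting $y$ to $(\inf x,\sup x)$ and by working with the one-sided limits $x_{t\pm}$, which exist by monotonicity.
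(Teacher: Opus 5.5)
The paper does not prove this proposition; it cites McNeil et al., so the comparison is with the standard direct argument, which is the route you take. Three parts of your proposal are correct:
\begin{itemize}
\item the final equivalence, including your observation that only the case $x^{-1}(y)=t$ of $(\Leftarrow)$ uses right-continuity;
\item Item 1;
\item Item 2 on $(\inf x,\sup x)$, where $x^{-1}$ is finite, as your ``conversely'' step needs.
\end{itemize}
You are also right that Item 2 genuinely needs some restriction. For $x_t=e^t$, $x^{-1}\equiv-\infty$ on $(-\infty,0]$, so $x^{-1}$ is not strictly increasing on $\mathbb{R}$.

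The gap is in Item 3, under the domain convention you announce. If $y$ is restricted to $(\inf x,\sup x)$, the implication ``$x^{-1}$ continuous $\Rightarrow$ $x$ strictly increasing'' is false. For $x_t=\max(t,0)$ one has $x^{-1}(y)=y$ on $(0,\infty)$. For $x_t=\min(t,0)$ one has $x^{-1}(y)=y$ on $(-\infty,0)$. Both are continuous there, yet neither $x$ is strictly increasing. Your step ``which is a jump'' breaks exactly in these cases. When the flat level $c$ equals $\inf x$, so that $x^{-1}(c)=-\infty$, or equals $\sup x$, so that $x^{-1}(y)=+\infty$ for $y>c$, the discontinuity of $x^{-1}$ sits at the boundary point $c$. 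It is therefore invisible on the open interval.

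The fix is to use different conventions for Items 2 and 3. Two options work for Item 3:
\begin{itemize}
\item Take $x^{-1}$ on all of $\mathbb{R}$ with values in $[-\infty,\infty]$. Your argument then works verbatim in both directions, because $x^{-1}(c)\leq a<b\leq\lim_{y\downarrow c}x^{-1}(y)$ is a genuine discontinuity at $c$ even when $x^{-1}(c)=-\infty$ or the right limit is $+\infty$. Likewise a right jump at $y_0$ makes $x\equiv y_0$ on a nondegenerate, possibly unbounded, interval.
\item Keep $y\in(\inf x,\sup x)$ but weaken the conclusion to ``$x$ is strictly increasing on $\{t:\inf x<x_t<\sup x\}$''. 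Your contrapositive argument proves exactly this, since the flat level $c$ is then interior. This localized form is also what the paper actually uses in Lemma \ref{continuity_set_continuous_paths}, where strict increase is only available on $[\underline{x},\overline{x})$ and continuity of the inverse is needed only on $(0,1)$.
\end{itemize}

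Finally, your caution about $x^{-1}(y_0+)$ ``without right-continuity of $x$'' is unnecessary. Nothing in Items 1--3 uses right-continuity of $x$.
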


We now introduce the hyperplane $\alpha$-quantile functional on the Skorokhod space, which generalizes its special case found in the literature. We thus reformulate the quantity presented in Equation \ref{alpha_quantile} as a deterministic function acting on the space of $\mathbb{R}^d$-valued càdlàg functions. To identify a continuity set, we employ a proof technique idea found in (\cite{JacodShiryaev}; 2.10. p. 340, 2.11. p. 341) for the continuity of first exit times; this argument is based on dense sets in $\mathbb{R}$. The gist of this technique will be present in later arguments as well.

\begin{lemma} 
	\label{alpha_quantile_properties}
	Let $t>0,\alpha \in (0,1),\gamma \in \mathbb{R}^d$. Define the hyperplane $\alpha$-quantile functional $\mathcal{D}_{\mathbb{R}^d}[0,\infty) \ni x\mapsto M_{t,\alpha}(x)$ by
	\begin{equation}
	M_{t,\alpha}(x)=\inf\bigg\{y:\frac{1}{t}\int_0^t\mathbf{1}_{\{z\in \mathbb{R}^d:\gamma \cdot z \leq y\}}(x_s)ds\geq \alpha\bigg\}
	\end{equation}
	Then:
	\begin{enumerate}
		\item $M_{t,\alpha}$ is a $\mathscr{D}_{\mathbb{R}^d}[0,\infty)/\mathscr{B}(\mathbb{R})$-measurable function;
		\item For $t>0,x \in \mathcal{D}_{\mathbb{R}^d}[0,\infty)$ fixed, $\alpha \mapsto M_{t,\alpha}(x)$ is nondecreasing and left-continuous;
		\item Let $M_{t,\alpha^+}(x):=\lim_{\beta \downarrow \alpha}M_{t,\beta}(x)$. The set $U(x,t)=\{\alpha\in (0,1):M_{t,\alpha}(x)<M_{t,\alpha^+}(x)\}$ is at most countable.
	\end{enumerate}
\end{lemma}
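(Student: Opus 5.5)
The plan is to view $M_{t,\alpha}(x)$ as a generalized inverse and then read off all three claims from Proposition \ref{McNeiletal}. Fix $t>0$, $x$ and $\gamma$, and define the occupation distribution function
\[
F_x(y):=\frac{1}{t}\int_0^t\mathbf{1}_{\{\gamma\cdot x_s\leq y\}}ds .
\]
This $F_x$ is nondecreasing in $y$, with values in $[0,1]$, and it is right-continuous by dominated convergence, since $\mathbf{1}_{\{\gamma\cdot x_s\le y+\epsilon\}}\downarrow \mathbf{1}_{\{\gamma\cdot x_s\le y\}}$ as $\epsilon\downarrow 0$. Because $x$ is càdlàg, $\gamma\cdot x$ is bounded on $[0,t]$. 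Hence $F_x(y)=1$ for $y\geq \sup_{s\le t}\gamma\cdot x_s$ and $F_x(y)=0$ for $y<\inf_{s\le t}\gamma\cdot x_s$. It follows that $M_{t,\alpha}(x)=F_x^{-1}(\alpha)$ is finite for every $\alpha\in(0,1)$. (If $\gamma=0$, then $F_x\equiv 1$ and $M_{t,\alpha}(x)=-\infty$; I would either exclude this trivial case or allow extended-real values.)

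For item 2, apply Proposition \ref{McNeiletal}(1) with the roles of variable and value swapped: $\alpha\mapsto F_x^{-1}(\alpha)$ is nondecreasing and left-continuous. For item 3, a nondecreasing function on $(0,1)$ has at most countably many discontinuities. Since $M_{t,\cdot}(x)$ is left-continuous, its discontinuities are exactly the points where $M_{t,\alpha}(x)<M_{t,\alpha^+}(x)$. Choosing a distinct rational in each open interval $(M_{t,\alpha}(x),M_{t,\alpha^+}(x))$, which are pairwise disjoint by monotonicity, gives an injection of $U(x,t)$ into $\mathbb{Q}$.

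The main step is item 1, measurability. By right-continuity of $F_x$, Proposition \ref{McNeiletal} gives $\{M_{t,\alpha}(x)\le y\}=\{F_x(y)\ge\alpha\}$. So it suffices that, for each fixed $y$, the map $x\mapsto F_x(y)$ is $\mathscr{D}_{\mathbb{R}^d}[0,\infty)$-measurable. The Borel $\sigma$-field of the Skorokhod space coincides with the $\sigma$-field generated by the coordinate maps $x\mapsto x_s$ (standard, e.g. Jacod--Shiryaev VI.1.14). Since paths are càdlàg, the map $(s,x)\mapsto x_s$ on $[0,t]\times\mathcal{D}$ is jointly measurable: it is the pointwise limit of the right-continuous dyadic approximations $x_{\lceil 2^k s\rceil/2^k}$. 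Therefore $(s,x)\mapsto\mathbf{1}_{\{\gamma\cdot x_s\le y\}}$ is $\mathscr{B}[0,t]\otimes\mathscr{D}$-measurable, and Fubini--Tonelli yields measurability of $x\mapsto\int_0^t\mathbf{1}_{\{\gamma\cdot x_s\le y\}}ds$. Then $\{x: M_{t,\alpha}(x)\le y\}\in\mathscr{D}_{\mathbb{R}^d}[0,\infty)$ for all $y$, which gives item 1. The expected obstacle is the careful justification of joint measurability and of the identification of the Borel $\sigma$-field; everything else is a direct application of Proposition \ref{McNeiletal}.
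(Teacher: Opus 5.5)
Your proposal is correct and follows the paper's route. Like the paper, you identify $M_{t,\alpha}(x)$ as the generalized inverse of the right-continuous occupation distribution $F_x$, get measurability from $\{M_{t,\alpha}(x)\le y\}=\{F_x(y)\ge\alpha\}$, and read off items 2 and 3 from Proposition \ref{McNeiletal} and monotonicity. You also supply the joint-measurability and Tonelli justification that the paper only asserts.

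One correction to your aside: for $\gamma=0$ you get $F_x=\mathbf{1}_{[0,\infty)}$, not $F_x\equiv 1$. Hence $M_{t,\alpha}(x)=0$, and your boundedness argument already covers this case, so no exclusion or extended-real values are needed.
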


\begin{proof} We first note that $\mathbf{1}_{\{z\in \mathbb{R}^d:\gamma \cdot z \leq y\}}(x_s)=\mathbf{1}_{(-\infty,y]}(\gamma \cdot x_s)$. (1) Let $t>0,x \in \mathcal{D}_{\mathbb{R}^d}[0,\infty)$. Then $y \mapsto \frac{1}{t}\int_0^t\mathbf{1}_{(-\infty,y]}(\gamma \cdot x_s)ds$ is nondecreasing, taking values in $[0,1]$. Also since $\mathbf{1}_{(-\infty,y]}(\gamma \cdot x_s)=\mathbf{1}_{[\gamma \cdot x_s,\infty)}(y)$ it is also right-continuous in $y$, and therefore, $\alpha \mapsto M_{t,\alpha}(x)$ is its generalized inverse function. So we have $M_{t,\alpha}(x)>y\iff \frac{1}{t}\int_0^t\mathbf{1}_{(-\infty,y]}(\gamma \cdot x_s)ds<\alpha$, therefore $M_{t,\alpha}(x)$ is a measurable function - indeed, the time integral functional is a $\mathscr{D}_{\mathbb{R}^d}[0,\infty)/\mathscr{B}(\mathbb{R})$-measurable function. (2) It follows that $\alpha \mapsto M_{t,\alpha}(x)$ is nondecreasing and left-continuous (see Proposition \ref{McNeiletal}). (iii) Since $\alpha \mapsto M_{t,\alpha}(x)$ is nondecreasing, it has at most countably many discontinuities and the claim follows.
\end{proof}

\begin{lemma} 
	\label{continuity_set_continuous_paths}	
	\textcolor{black}{
	}
	$x \in C_{\mathbb{R}^d}[0,\infty)$ implies $\{\alpha\in (0,1):M_{t,\alpha}(x)<M_{t,\alpha^+}(x)\}=\emptyset$, i.e. $\alpha\mapsto M_{t,\alpha}(x)$ for $\alpha \in (0,1)$ is continuous.
\end{lemma}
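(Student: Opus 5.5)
The plan is to use the generalized inverse structure from the proof of Lemma \ref{alpha_quantile_properties}. Fix $t>0$ and write $F(y):=\frac{1}{t}\int_0^t\mathbf{1}_{(-\infty,y]}(\gamma\cdot x_s)ds$. This is nondecreasing and right-continuous with values in $[0,1]$, and $\alpha\mapsto M_{t,\alpha}(x)$ is its generalized inverse. By Proposition \ref{McNeiletal}(3), $F^{-1}$ is continuous if and only if $F$ is strictly increasing. So I will prove that $F$ is strictly increasing on the relevant range of $y$, namely where $0<F(y)<1$. It is enough to restrict to that range, because only $\alpha\in(0,1)$ matters and the inverse is determined there by the values of $F$ on $\{y:0<F(y)<1\}$.

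First, assume $\gamma\neq 0$; if $\gamma=0$ the claim is trivial, since then $F(y)=\mathbf{1}_{[0,\infty)}(y)$ and $M_{t,\alpha}(x)=0$ for every $\alpha$. Set $f(s):=\gamma\cdot x_s$, which is continuous on $[0,t]$. Let $m:=\min_{[0,t]}f$ and $\bar m:=\max_{[0,t]}f$. Then $F(y)=0$ for $y<m$ and $F(y)=1$ for $y\ge\bar m$. If $m=\bar m$, then $M_{t,\alpha}(x)=m$ for all $\alpha$, which is continuous. Otherwise, take $m\le y_1<y_2\le\bar m$. I need $F(y_2)-F(y_1)=\frac1t\lambda\{s\le t: y_1<f(s)\le y_2\}>0$. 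By the intermediate value theorem, $f$ attains the value $(y_1+y_2)/2\in(m,\bar m)$ at some $s_0\in[0,t]$. By continuity, $f(s)\in(y_1,y_2)$ on a relative neighbourhood of $s_0$ in $[0,t]$, which has positive Lebesgue measure. Hence $F$ is strictly increasing on $[m,\bar m]$.

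To conclude directly rather than through the ``iff'' of Proposition \ref{McNeiletal}, suppose there were a jump at some $\alpha\in(0,1)$, i.e. $a:=M_{t,\alpha}(x)<b:=M_{t,\alpha^+}(x)$. For every $y\in(a,b)$ we have $y<M_{t,\beta}(x)$ for all $\beta>\alpha$, so $F(y)<\beta$ for all such $\beta$ and thus $F(y)\le\alpha$. We also have $y>M_{t,\alpha}(x)$, so $F(y)\ge\alpha$. Therefore $F\equiv\alpha$ on $(a,b)$. Since $\alpha\in(0,1)$, the interval $(a,b)$ lies inside $[m,\bar m]$, because outside that interval $F\in\{0,1\}$. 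This contradicts strict monotonicity. So $U(x,t)=\emptyset$, and combined with left-continuity from Lemma \ref{alpha_quantile_properties}(2) this gives continuity on $(0,1)$.

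The main obstacle is the bookkeeping of the boundary cases: the flat parts of $F$ at levels $0$ and $1$, the degenerate $\gamma=0$ or constant $f$, and making sure that a plateau at a level $\alpha\in(0,1)$ really forces an interval strictly inside $(m,\bar m)$. The core positivity step, that continuity of $\gamma\cdot x$ gives positive occupation time of every open sub-interval of its range, is routine.
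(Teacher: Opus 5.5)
Your proof is correct and follows the paper's argument. Both reduce the claim to strict monotonicity of $y\mapsto\frac1t\int_0^t\mathbf{1}_{(-\infty,y]}(\gamma\cdot x_s)\,ds$ on $[\min_{s\le t}\gamma\cdot x_s,\max_{s\le t}\gamma\cdot x_s)$, which follows because continuity of $\gamma\cdot x$ gives positive Lebesgue measure to the preimage of any open subinterval of its range. Your direct plateau argument (a jump at $\alpha\in(0,1)$ would force $F\equiv\alpha$ on a nondegenerate interval inside that range) is a slightly more careful substitute for the paper's appeal to Proposition \ref{McNeiletal}, whose item (3) is stated for globally strictly increasing functions.
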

\begin{proof} Consider $y \mapsto \frac{1}{t}\int_0^t\mathbf{1}_{(-\infty,y]}(\gamma \cdot x_s)ds$. Since $s\mapsto \gamma \cdot x_s$ is continuous in $[0,t]$, it attains both its maximum $\overline{x}:=\sup_{s\leq t}(\gamma \cdot x_s)$ and its minimum $\underline{x}:=\inf_{s\leq t}(\gamma \cdot x_s)$. Without loss of generality, let $\underline{x}<\overline{x}$ (the case $\underline{x}=\overline{x}$ is immediate). We note: $\int_0^t\mathbf{1}_{(-\infty,y]}(\gamma \cdot x_s)ds=\lambda(\{s\in[0,t]:\gamma \cdot x_s\leq y\})$. Let $\varepsilon >0$ and $y \in [\underline{x},\overline{x})$. Indicate with $(\gamma \cdot x)^{-1}$ the inverse image of $s \mapsto \gamma \cdot x_s$. We get $\lambda(\{s\in[0,t]:y<\gamma \cdot x_s\leq y+\varepsilon\})\geq \lambda((\gamma \cdot x)^{-1}((y,y+\varepsilon))\cap[0,t])$. But $(\gamma \cdot x)^{-1}((y,y+\varepsilon))\cap [0,t]$ is non-empty and contains a non-empty open set since $\gamma \cdot x$ is continuous. So $\lambda(\{s\in[0,t]:y<\gamma \cdot x_s\leq y+\varepsilon\})>0$, and we conclude that $y \mapsto \frac{1}{t}\int_0^t\mathbf{1}_{(-\infty,y]}(\gamma \cdot x_s)ds$ is strictly increasing over $[\underline{x},\overline{x})$, which implies (see Proposition \ref{McNeiletal}) our conclusion.
\end{proof}

\begin{proposition}
	\label{alpha_quantile_continuity}
	Let $t>0,\gamma \in \mathbb{R}^d$. The hyperplane $\alpha$-quantile function $x\mapsto M_{t,\alpha}(x)$ is continuous in the Skorokhod topology at all $x$ such that $\alpha \notin U(x,t)=\{\alpha \in (0,1):M_{t,\alpha}(x)<M_{t,\alpha^+}(x)\}$. 
\end{proposition}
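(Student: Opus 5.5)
Write $F_x(y):=\frac{1}{t}\int_0^t\mathbf{1}_{(-\infty,y]}(\gamma\cdot x_s)\,ds$, so that $M_{t,\alpha}(x)=F_x^{-1}(\alpha)$ by Lemma \ref{alpha_quantile_properties}. The plan follows the dense-set technique of (\cite{JacodShiryaev}; 2.10--2.11): first show that $F_{x^n}(y)\to F_x(y)$ for all $y$ outside an at most countable set $A(x)$, and then invert this pointwise convergence at the continuity points $\alpha\notin U(x,t)$.

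\textbf{Step 1: pointwise convergence of $F_{x^n}$.} Let $x^n\to x$ in the Skorokhod topology. Then $\gamma\cdot x^n\to\gamma\cdot x$ in $\mathcal{D}[0,\infty)$, since $z\mapsto\gamma\cdot z$ is linear and continuous. Skorokhod convergence gives $\gamma\cdot x^n_s\to\gamma\cdot x_s$ at every continuity point $s$ of $x$, i.e. for all $s$ outside a countable set, hence for $\lambda$-a.e. $s\in[0,t]$. Define
$$A(x):=\{y:\lambda(\{s\le t:\gamma\cdot x_s=y\})>0\}.$$
This set is at most countable, because the level sets are disjoint subsets of $[0,t]$ and so at most countably many of them can have positive measure. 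Fix $y\notin A(x)$. For a.e. $s$ we have both $\gamma\cdot x_s\neq y$ and $\gamma\cdot x^n_s\to\gamma\cdot x_s$, so $\mathbf{1}_{(-\infty,y]}(\gamma\cdot x^n_s)\to\mathbf{1}_{(-\infty,y]}(\gamma\cdot x_s)$ for a.e. $s$. Bounded convergence then gives $F_{x^n}(y)\to F_x(y)$. This step avoids time changes entirely: only a.e. pointwise convergence of the paths is needed.

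\textbf{Step 2: inversion at continuity points.} Fix $\alpha\notin U(x,t)$, so $M_{t,\alpha}(x)=M_{t,\alpha^+}(x)=:m$, and let $\varepsilon>0$. Because $A(x)$ is countable, we can choose $y_1\in(m-\varepsilon,m)$ and $y_2\in(m,m+\varepsilon)$ with $y_1,y_2\notin A(x)$.
\begin{itemize}
\item Since $y_1<m=M_{t,\alpha}(x)$, Proposition \ref{McNeiletal} gives $F_x(y_1)<\alpha$.
\item For $y_2$: because $m=M_{t,\alpha^+}(x)$, for every $\beta>\alpha$ we have $M_{t,\beta}(x)\ge m$. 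We claim that $F_x(y_2)>\alpha$. If instead $F_x(y_2)\le\alpha$, then $F_x(y)\le\alpha$ for every $y<y_2$. Hence for all $\beta>\alpha$, $M_{t,\beta}(x)\ge y_2$, and therefore $M_{t,\alpha^+}(x)\ge y_2>m$, a contradiction.
\end{itemize}
By Step 1, for all large $n$ we get $F_{x^n}(y_1)<\alpha<F_{x^n}(y_2)$. Proposition \ref{McNeiletal} applies, since $F_{x^n}$ is right-continuous and nondecreasing, and yields $y_1<M_{t,\alpha}(x^n)\le y_2$. Hence $|M_{t,\alpha}(x^n)-m|<\varepsilon$, which proves continuity at $x$.

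\textbf{Expected obstacle.} The delicate point is the strict inequality $F_x(y_2)>\alpha$ in Step 2. This is exactly where the hypothesis $\alpha\notin U(x,t)$ enters: if $F_x$ had a flat stretch at level $\alpha$, then $F_{x^n}(y_2)$ could approach $\alpha$ from below and $M_{t,\alpha}(x^n)$ could jump. The argument above handles this through the right-limit characterization of $M_{t,\alpha^+}(x)$. A lesser technical point is that Step 1 only needs convergence at continuity points of $x$, which is standard for J1 convergence (\cite{JacodShiryaev}; VI.2.3).
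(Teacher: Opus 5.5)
Your proposal is correct and follows essentially the same route as the paper: you get convergence of $F_{x^n}(y)$ at levels $y$ whose level sets in $[0,t]$ are Lebesgue-null. You then trap $M_{t,\alpha}(x^n)$ between such levels just below and just above $m$, using $\alpha\notin U(x,t)$ to obtain $F_x(y_2)>\alpha$; this is the paper's liminf/limsup argument in $\varepsilon$-bracket form. Your Step 1 also supplies a justification the paper only asserts, namely a.e.\ convergence of $\gamma\cdot x^n_s$ at continuity points of $x$, bounded convergence, and countability of the exceptional set $A(x)$.
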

\begin{proof} Suppose $x^n\to x$ in the Skorokhod topology and $\alpha \notin U(x,t)$. Then $\gamma \cdot x^n\to \gamma \cdot x$ in the Skorokhod topology. Let $y \in \{y:\lambda(\{t:\gamma \cdot x_t=y\})=0\}$; then $\frac{1}{t}\int_0^t\mathbf{1}_{(-\infty,y]}(\gamma \cdot x_s^n)ds\to \frac{1}{t}\int_0^t\mathbf{1}_{(-\infty,y]}(\gamma \cdot x_s)ds$ in the usual sense. If $M_{t,\alpha}(x)>y$ then $\frac{1}{t}\int_0^t\mathbf{1}_{(-\infty,y]}(\gamma \cdot x_s)ds<\alpha$ and $\frac{1}{t}\int_0^t\mathbf{1}_{(-\infty,y]}(\gamma \cdot x_s^n)ds<\alpha$ for $n$ large enough, therefore $M_{t,\alpha}(x^n)>y$ as well. The set $\{y:\lambda(\{t:\gamma \cdot x_t=y\})=0\}$ is dense in $\mathbb{R}$. We then conclude that $\liminf_{n \to \infty}M_{t,\alpha}(x^n)\geq M_{t,\alpha}(x)$. Since $\alpha \notin U(x,t)$, we get that $y>M_{t,\alpha}(x)$ implies $y>M_{t,\alpha^+}(x)$. Then, for some $\beta>\alpha$ we have $y\geq M_{t,\beta}(x)$, which implies $\frac{1}{t}\int_0^t\mathbf{1}_{(-\infty,y]}(\gamma \cdot x_s)ds\geq \beta>\alpha$. If $y \in \{y:\lambda(\{t:\gamma \cdot x_t=y\})=0\}$, then $\frac{1}{t}\int_0^t\mathbf{1}_{(-\infty,y]}(\gamma \cdot x_s^n)ds>\alpha$ for $n$ large enough and we conclude $M_{t,\alpha}(x^n)\leq y$. We then have $\limsup_{n \to \infty}M_{t,\alpha}(x^n)\leq M_{t,\alpha}(x)$ and we conclude.
\end{proof}

\begin{theorem}
	\label{weak_convergence_alpha_quantile}	
	Let $\alpha \in (0,1),t>0,\gamma \in \mathbb{R}^d$. Let $X^n\Rightarrow X$. Suppose that for all $\varepsilon >0$ we have, alternatively:
	\begin{enumerate}
		\item $P(M_{t,\beta}(X)>M_{t,\alpha}(X)+\varepsilon)\to 0$ as $\beta \downarrow \alpha$ or,
		\item $P(X \in C_{\mathbb{R}^d}[0,\infty))=1$;
	\end{enumerate}
	Then $M_{t,\alpha}(X^n)\to^{\textnormal{w}}M_{t,\alpha}(X)$ in the usual sense.
\end{theorem}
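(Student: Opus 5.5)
The plan is to reduce the statement to the continuous mapping theorem. Proposition \ref{alpha_quantile_continuity} says $x\mapsto M_{t,\alpha}(x)$ is Skorokhod-continuous at every $x$ with $\alpha\notin U(x,t)$. So it suffices to show, under either hypothesis, that
$$P\big(\alpha\in U(X,t)\big)=P\big(M_{t,\alpha}(X)<M_{t,\alpha^+}(X)\big)=0.$$
Once this holds, the discontinuity set of $M_{t,\alpha}$ is $P\circ X^{-1}$-null. Measurability of $M_{t,\alpha}$ is Lemma \ref{alpha_quantile_properties}(1). The continuous mapping theorem applied to $X^n\Rightarrow X$ then gives $M_{t,\alpha}(X^n)\to^{\textnormal{w}}M_{t,\alpha}(X)$.

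\textbf{Case (2).} Here the reduction is immediate. On the full-measure event $\{X\in C_{\mathbb{R}^d}[0,\infty)\}$, Lemma \ref{continuity_set_continuous_paths} gives $U(X,t)=\emptyset$. Hence $\alpha\notin U(X,t)$ almost surely.

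\textbf{Case (1).} This is the step needing some care.
\begin{itemize}
\item For each fixed $x$, the map $\beta\mapsto M_{t,\beta}(x)$ is nondecreasing by Lemma \ref{alpha_quantile_properties}(2). So $M_{t,\beta}(X)\downarrow M_{t,\alpha^+}(X)$ pointwise as $\beta\downarrow\alpha$.
\item Consequently, for fixed $\varepsilon>0$ the events $\{M_{t,\beta}(X)>M_{t,\alpha}(X)+\varepsilon\}$ decrease as $\beta\downarrow\alpha$ along a sequence $\beta_k\downarrow\alpha$.
\item Their intersection contains $\{M_{t,\alpha^+}(X)>M_{t,\alpha}(X)+\varepsilon\}$, and is in fact equal to $\{M_{t,\alpha^+}(X)\ge M_{t,\alpha}(X)+\varepsilon\}$ up to the strict/non-strict boundary.
\item By continuity from above of $P$ and the hypothesis, $P\big(M_{t,\alpha^+}(X)>M_{t,\alpha}(X)+\varepsilon\big)=0$ for every $\varepsilon>0$.
\item Taking the union over $\varepsilon=1/m$, $m\in\mathbb{N}$, gives $P(M_{t,\alpha}(X)<M_{t,\alpha^+}(X))=0$.
\end{itemize}
Measurability of $M_{t,\alpha^+}(X)$ follows because it is a countable limit of the measurable maps $M_{t,\beta_k}(X)$.

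\textbf{Main obstacles.} The argument is essentially routine, but two technical points need checking.
\begin{itemize}
\item One must check that the set of discontinuity points of $M_{t,\alpha}$ is contained in $\{x:\alpha\in U(x,t)\}$ and that the latter set is measurable. The version of continuous mapping being used only needs the discontinuity set to lie inside a $P\circ X^{-1}$-null measurable set. Measurability follows from the measurability of $M_{t,\alpha}$ and $M_{t,\alpha^+}$ on $\mathcal{D}_{\mathbb{R}^d}[0,\infty)$.
\item One must handle $M_{t,\alpha^+}$ possibly being $+\infty$ near $\beta\uparrow 1$. This does not occur here, since for càdlàg $x$ the path $\gamma\cdot x$ is bounded on $[0,t]$, so all the quantiles involved are finite.
\end{itemize}
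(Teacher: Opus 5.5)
Your proposal is correct and follows the paper's proof essentially step for step. You reduce to continuous mapping via Proposition \ref{alpha_quantile_continuity}, settle case (2) with Lemma \ref{continuity_set_continuous_paths}, and in case (1) use the containment $\{M_{t,\beta_k}(X)>M_{t,\alpha}(X)+\varepsilon\}\supseteq\{M_{t,\alpha^+}(X)>M_{t,\alpha}(X)+\varepsilon\}$ to conclude $P(M_{t,\alpha^+}(X)>M_{t,\alpha}(X))=0$; as in the paper, this containment plus monotonicity of $P$ already suffices, so you do not need continuity from above.
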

\begin{proof} We have shown in Proposition \ref{alpha_quantile_continuity} that $x\mapsto M_{t,\alpha}(x)$ is continuous at all $x$ such that $\alpha \notin U(x,t)$, i.e. $x \notin \{x:M_{t,\alpha}(x)<M_{t,\alpha^+}(x)\}$. Recall also that $\beta \mapsto M_{t,\beta}(x)$ is nondecreasing left-continuous by Lemma \ref{alpha_quantile_properties}. If (2) holds, the claim follows by Lemma \ref{continuity_set_continuous_paths}. If (1) holds, without loss of generality let $\beta_k\downarrow  \alpha$ for a sequence $(\beta_k)_{k \in \mathbb{N}}\subseteq (0,1)$. Then for $\varepsilon >0$ we have
	$$
	\{\omega:M_{t,\beta_k}(X(\omega))>M_{t,\alpha}(X(\omega))+\varepsilon\}\supseteq\{\omega:M_{t,\alpha^+}(X(\omega))>M_{t,\alpha}(X(\omega))+\varepsilon\},\,\forall k \in \mathbb{N}
	$$
	which implies $P(M_{t,\alpha^+}(X)>M_{t,\alpha}(X)+\varepsilon)=0,\forall \varepsilon >0$ and so $P(M_{t,\alpha^+}(X)>M_{t,\alpha}(X))=0$. But then, $X$ a.s. belongs to the continuity set we have identified in Proposition \ref{alpha_quantile_continuity}, and by continuous mapping the claim follows.
\end{proof}

\begin{lemma}
	\label{alpha_quantile_bounds}	
	Let $\alpha \in (0,1),t>0,\gamma \in \mathbb{R}^d$ and $x \in \mathcal{D}_{\mathbb{R}^d}[0,\infty)$. Then $\inf_{s\leq t}(\gamma \cdot x_s)\leq M_{t,\alpha}(x)\leq \sup_{s\leq t}(\gamma \cdot x_s)$.
\end{lemma}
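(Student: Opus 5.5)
Write $F(y):=\frac{1}{t}\int_0^t\mathbf{1}_{(-\infty,y]}(\gamma \cdot x_s)ds$, so that $M_{t,\alpha}(x)=\inf\{y:F(y)\geq \alpha\}$ is the generalized inverse of $F$, as in the proof of Lemma \ref{alpha_quantile_properties}. Set $\underline{x}:=\inf_{s\leq t}(\gamma \cdot x_s)$ and $\overline{x}:=\sup_{s\leq t}(\gamma \cdot x_s)$. Both are finite, because a càdlàg function is bounded on the compact interval $[0,t]$. The plan is to prove the two inequalities separately, each by locating a value of $y$ at which $F$ is either $0$ or $1$.

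For the upper bound, I observe that for every $s\in[0,t]$ we have $\gamma\cdot x_s\leq \overline{x}$. Hence $\mathbf{1}_{(-\infty,\overline{x}]}(\gamma\cdot x_s)=1$ on all of $[0,t]$, and so $F(\overline{x})=1\geq\alpha$. This shows that $\overline{x}$ belongs to the set over which the infimum is taken, and therefore $M_{t,\alpha}(x)\leq \overline{x}$.

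For the lower bound, let $y<\underline{x}$. Then $\gamma\cdot x_s\geq \underline{x}>y$ for all $s\in[0,t]$, so $F(y)=0<\alpha$, using $\alpha>0$. Since $F$ is nondecreasing, the set $\{y:F(y)\geq\alpha\}$ therefore contains no $y<\underline{x}$. Its infimum is consequently at least $\underline{x}$, which gives $M_{t,\alpha}(x)\geq\underline{x}$. The same conclusion can be read off from Proposition \ref{McNeiletal}: because $F$ is right-continuous, $F(y)<\alpha\iff M_{t,\alpha}(x)>y$.

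I do not expect a serious obstacle; the only points needing care are the following. The set $\{y:F(y)\geq\alpha\}$ is nonempty, which follows from the upper-bound step, so $M_{t,\alpha}(x)$ is a finite real number. The strict inequalities $\alpha>0$ and $y<\underline{x}$ are both needed in the lower bound, since $F(\underline{x})$ itself may be positive, for instance when the infimum is attained on a set of positive measure. The same argument goes through if $\underline{x}$ is not attained, which can happen for a càdlàg path through a left limit.
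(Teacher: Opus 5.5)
Your proof is correct and rests on the same observation as the paper's: $F$ equals $0$ strictly below $\inf_{s\leq t}(\gamma \cdot x_s)$ and equals $1$ at $\sup_{s\leq t}(\gamma \cdot x_s)$. The paper proves each bound by contradiction using the generalized-inverse property of Proposition \ref{McNeiletal}. You argue directly from the definition of the infimum, which is equally valid and does not need right-continuity of $F$.
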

\begin{proof} Suppose $M_{t,\alpha}(x)<\inf_{s\leq t}(\gamma \cdot x_s)$. We have $\frac{1}{t}\int_0^t\mathbf{1}_{(-\infty,M_{t,\alpha}(x)]}(\gamma \cdot x_s)ds\geq \alpha>0$ (see Proposition \ref{McNeiletal}). But at the same time $\mathbf{1}_{(-\infty,M_{t,\alpha}(x)]}(\gamma \cdot x_s)=0,\forall s\leq t$ because $M_{t,\alpha}(x)<\inf_{s\leq t}(\gamma \cdot x_s)$, which is a contradiction. Now suppose $M_{t,\alpha}(x)>\sup_{s\leq t}(\gamma \cdot x_s)$. Then, this implies $\frac{1}{t}\int_0^t\mathbf{1}_{(-\infty,\sup_{s\leq t}(\gamma \cdot x_s)]}(\gamma \cdot x_s)ds<\alpha<1$ (see again Proposition \ref{McNeiletal}), but $\mathbf{1}_{(-\infty,\sup_{s\leq t}(\gamma \cdot x_s)]}(\gamma \cdot x_s)=1,\forall s\leq t$ which yields a contradiction. We conclude.
\end{proof}

We now discuss the following random time functional of the hyperplane $\alpha$-quantile - the first time at which this quantity has been hit - and we identify in detail some conditions at which this functional is continuous in the Skorokhod topology.

\begin{theorem}
	\label{stopping_quantile_continuity}
	Let $T>0,\alpha \in (0,1),\gamma \in \mathbb{R}^d$. Let $x^n\to x$ in the Skorokhod topology and $x$ such that $ \alpha \notin U(x,T)$. Define:
	\begin{equation}
	x\mapsto \tau_{M_{T,\alpha}}(x):=\begin{cases}
	\inf\bigg\{t>0:\sup_{s\leq t}(\gamma \cdot x)_s\geq  M_{T,\alpha}(x)\bigg\}&\textrm{ if }\,M_{T,\alpha}(x)>\gamma \cdot x_0\\
	0&\textrm{ if }\,M_{T,\alpha}(x)= \gamma \cdot x_0\\
	\inf\bigg\{t>0:\inf_{s\leq t}(\gamma \cdot x)_s\leq  M_{T,\alpha}(x)\bigg\}&\textrm{ if }\,M_{T,\alpha}(x)< \gamma \cdot x_0\\
	\end{cases}
	\end{equation}
	Suppose that $M_{T,\alpha}(x)\neq \gamma \cdot x_0$. Then:
	\begin{enumerate}
		\item $\liminf_{n}\tau_{M_{T,\alpha}}(x^n)\geq \tau_{M_{T,\alpha}}(x)$;
		\item If, equivalently, $y\mapsto \frac{1}{T}\int_0^T\mathbf{1}_{\{z:\gamma \cdot z\leq y\}}(x_s)ds$ is continuous or $\beta \mapsto M_{T,\beta}(x)$ is strictly increasing and additionally $\alpha\in  \{\alpha:\lim_{\beta \to \alpha}\tau_{M_{T,\beta}}(x)=\tau_{M_{T,\alpha}}(x)\}$, then we have $\tau_{M_{T,\alpha}}(x^n)\to \tau_{M_{T,\alpha}}(x)$.
	\end{enumerate} 
	In fact, if the conditions of (2) are satisfied, then we also have $$(M_{T,\alpha}(x^n),\tau_{M_{T,\alpha}}(x^n))\to (M_{T,\alpha}(x),\tau_{M_{T,\alpha}}(x))$$
	in the usual sense.
\end{theorem}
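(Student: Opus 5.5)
Throughout, write $m:=M_{T,\alpha}(x)$ and $m_n:=M_{T,\alpha}(x^n)$, and treat the case $m>\gamma\cdot x_0$; the case $m<\gamma\cdot x_0$ is symmetric, replacing $\gamma\cdot x$ by $-\gamma\cdot x$ and suprema by infima. Set $y=\gamma\cdot x$ and $y^n=\gamma\cdot x^n$. Then $y^n\to y$ in the Skorokhod topology on $\mathcal{D}[0,\infty)$. Since $\alpha\notin U(x,T)$, Proposition \ref{alpha_quantile_continuity} gives $m_n\to m$. Skorokhod convergence also gives $y^n_0\to y_0$, because every time change fixes $0$. Hence for $n$ large we have $m_n>y^n_0$, and $\tau_{M_{T,\alpha}}(x^n)$ is the first passage time of the running maximum of $y^n$ above the level $m_n$. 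The plan is to reproduce the argument of (\cite{JacodShiryaev}; 2.10, 2.11) for first passage times, with the additional complication that the level $m_n$ moves with $n$.

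For (1), let $\tau=\tau_{M_{T,\alpha}}(x)$ and take $t<\tau$. Then $\sup_{s\le t}y_s<m$, strictly, by the definition of the infimum and right-continuity. We need to control the supremum of $y^n$ on a slightly larger window. Choose $t'\in(t,\tau)$ with $\sup_{s\le t'}y_s<m-\eta$ for some $\eta>0$; such a $t'$ exists because the running supremum is right-continuous. Let $\uplambda_n$ be time changes with $\uplambda_n\to \mathrm{id}$ locally uniformly and $\sup_{s\le t'}|y^n_{\uplambda_n(s)}-y_s|\to0$. For $n$ large we then have $\uplambda_n(t')>t$, so $\sup_{s\le t}y^n_s<m-\eta/2<m_n$. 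Consequently $\tau_{M_{T,\alpha}}(x^n)\ge t$ eventually, and letting $t\uparrow\tau$ gives the liminf inequality.

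For (2), the limsup direction is the main obstacle, because an upward approach of the level can cause the hitting times to jump. Take $t>\tau$. I first want some $t_0<t$ with $\sup_{s\le t_0}y_s>m$ strictly. The hypothesis is used here: the continuity of $\beta\mapsto\tau_{M_{T,\beta}}(x)$ at $\alpha$, together with the strict monotonicity of $\beta\mapsto M_{T,\beta}(x)$, so that $M_{T,\beta}(x)>m$ for $\beta>\alpha$. Choosing $\beta>\alpha$ close to $\alpha$, we get $\tau_{M_{T,\beta}}(x)<t$ and $\sup_{s\le t_0}y_s\ge M_{T,\beta}(x)>m$ for some $t_0<t$. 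The equivalence stated in (2) is Proposition \ref{McNeiletal}(3) applied to the distribution function $y\mapsto\frac1T\int_0^T\mathbf 1_{(-\infty,y]}(y_s)ds$ and its inverse $\beta\mapsto M_{T,\beta}(x)$. Next, by the uniform closeness of $y^n\circ\uplambda_n$ to $y$ on $[0,t]$ and $\uplambda_n(t_0)<t$ for $n$ large, we get $\sup_{s\le t}y^n_s\ge \sup_{s\le t_0}y_s-o(1)>m+\epsilon>m_n$ eventually. Hence $\tau_{M_{T,\alpha}}(x^n)\le t$, and letting $t\downarrow\tau$ gives the limsup inequality.

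Finally, joint convergence follows because $m_n\to m$ by Proposition \ref{alpha_quantile_continuity} and convergence of each coordinate in $\mathbb{R}^2$ gives convergence of the pair. The delicate points to check are the case $M_{T,\alpha}(x)\ne\gamma\cdot x_0$ versus $m_n$, which I handle as above by showing the case eventually agrees with that of $x$, and the use of right-continuity of running suprema for the strictness step in (1).
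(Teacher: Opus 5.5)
Your proof is correct and is essentially the paper's argument: $m_n\to m$ from Proposition \ref{alpha_quantile_continuity}, a strict gap below the level before $\tau$ for the $\liminf$, and for the $\limsup$ a strict overshoot $M_{T,\beta}(x)>m$ obtained from strict monotonicity plus continuity of $\beta\mapsto\tau_{M_{T,\beta}}(x)$ (in the infimum case one uses $M_{T,\beta}(x)<m$ with $\beta\uparrow\alpha$). The only technical difference is that you handle the time distortion directly with the $\uplambda_n$ and the auxiliary times $t'$, $t_0$, whereas the paper restricts to continuity points of $\gamma\cdot x$, where running suprema and infima converge, and uses their density; note also that the equivalence in (2) is item (2), not item (3), of Proposition \ref{McNeiletal}.
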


\begin{proof} First note that, since $x^n\to x$ in the Skorokhod topology, then $x^n_0\to x_0$ and so $\gamma \cdot x^n_0\to \gamma\cdot x_0$. We exlcude the case $M_{T,\alpha}(x)=\gamma \cdot x_0$ by assumption. For the cases $M_{T,\alpha}(x)>\gamma \cdot x_0$ and $M_{T,\alpha}(x)<\gamma \cdot x_0$, we prove the first two claims in order:
	
	\begin{enumerate}
		
		\item From the fact that $x \notin \{x:M_{T,\alpha}(x)<M_{T,\alpha^+}(x)\}$ it follows $M_{T,\alpha}(x^n)\to M_{T,\alpha}(x)$ by Proposition \ref{alpha_quantile_continuity}. Suppose $M_{T,\alpha}(x)<\gamma \cdot x_0$. For all $n$ sufficiently large $M_{T,\alpha}(x^n)<\gamma \cdot x^n_0$. Let $t \notin \{t:|\Delta (\gamma \cdot x)_t|>0\}$; then $\inf_{s\leq t}(\gamma \cdot x^n)_s	\to \inf_{s\leq t}(\gamma \cdot x)_s$. If $t<\tau_{M_{T,\alpha}}(x)$, then $\inf_{s\leq t}(\gamma \cdot x)_s>M_{T,\alpha}(x)$ and for $n$ sufficiently large we have $\inf_{s\leq t}(\gamma\cdot x^n)_s>M_{T,\alpha}(x^n)$ so $t<\tau_{M_{T,\alpha}}(x^n)$ as well for all $n$ large enough. Since $\{t:|\Delta (\gamma \cdot x)_t|>0\}$ is at most countable due to $\gamma \cdot x$ being càdlàg, its complement is dense in $\mathbb{R}$, and we conclude. An analogous argument holds for $M_{T,\alpha}(x)>\gamma \cdot x_0$. 
		
		\item The equivalence of the first conditions can be seen from Proposition \ref{McNeiletal}. Consider $x$ such that $\alpha \in \{\alpha:\lim_{\beta \to \alpha}\tau_{M_{T,\beta}}(x)=\tau_{M_{T,\alpha}}(x)\}$, in addition to $\alpha \notin U(x,T)$. Suppose again $M_{T,\alpha}(x)<\gamma \cdot x_0$. Choose a sequence $\beta_k\uparrow \alpha$. If $t >\tau_{M_{T,\alpha}}(x)$ then $t >\lim_{k}\tau_{M_{T,\beta_k}}(x)$ so $t \geq \tau_{M_{T,\beta_k}}(x)$ for some $\beta_k <\alpha$. Let $t \notin \{t:|\Delta (\gamma \cdot x)_t|>0\}$, so that $\inf_{s\leq t}(\gamma \cdot x^n)_s\to \inf_{s\leq t}(\gamma \cdot x)_s$. We have $\inf_{s\leq t}(\gamma \cdot x)_s\leq M_{T,\beta_k}(x)<M_{T,\alpha}(x)$ (due to strict monotonicity of $\beta \mapsto M_{T,\beta}(x)$) and $\inf_{s\leq t}(\gamma \cdot x^n)_s<M_{T,\alpha}(x^n)$ for all $n$ large enough because $M_{T,\alpha}(x^n)\to M_{T,\alpha}(x)$ since $\alpha \notin U(x,T)$. Therefore, $t \geq \tau_{M_{T,\alpha}}(x^n)$ for all $n$ large enough. Since $\{t:|\Delta (\gamma \cdot x)_t|>0\}$ is at most countable, we conclude that $\limsup_n\tau_{M_{T,\alpha}}(x^n)\leq \tau_{M_{T,\alpha}}(x)$. Now suppose $M_{T,\alpha}(x)>\gamma \cdot x_0$. Let again $t \notin \{t:|\Delta (\gamma \cdot x)_t|>0\}$ so that $\sup_{s\leq t}(\gamma \cdot x^n)_s\to \sup_{s\leq t}(\gamma \cdot x)_s$. Choose a sequence $\beta_k\downarrow \alpha$. If $t >\tau_{M_{T,\alpha}}(x)$ then $t >\lim_{k}\tau_{M_{T,\beta_k}}(x)$ so $t \geq \tau_{M_{T,\beta_k}}(x)$ for some $\beta_k >\alpha$. We have $\sup_{s\leq t}(\gamma \cdot x)_s\geq M_{T,\beta_k}(x)> M_{T,\alpha}(x)$ and $\sup_{s\leq t}(\gamma \cdot x^n)_s>M_{T,\alpha}(x^n)$ for all $n$ large enough. So $t\geq \tau_{M_{T,\alpha}}(x^n)$ for all $n$ large enough. Since $\{t:|\Delta (\gamma \cdot x)_t|>0\}$ is at most countable, we conclude that $\limsup_n\tau_{M_{T,\alpha}}(x^n)\leq \tau_{M_{T,\alpha}}(x)$. We then conclude that $\tau_{M_{T,\alpha}}(x^n)\to \tau_{M_{T,\alpha}}(x)$.
	\end{enumerate}
	The last statement immediately follows from the previous result and the fact that $|M_{T,\alpha}(x^n)- M_{T,\alpha}(x)|\to 0$, since $\alpha \notin U(x,T)$ by assumption.
\end{proof}

\begin{corollary} 
	\label{stopping_quantile_continuity_C}
	Let $\alpha \in (0,1),T>0,\gamma \in \mathbb{R}^d$. Suppose that $x \in C_{\mathbb{R}^d}[0,\infty)$; then we have
	\begin{equation}
	\tau_{M_{T,\alpha}}(x)=\inf\{t:\gamma \cdot x_t=M_{T,\alpha}(x)\}
	\end{equation}
	If $x^n\to x$ in the Skorokhod topology and:
	\begin{enumerate}
		\item $M_{T,\alpha}(x)\neq \gamma \cdot x_0$;
		\item Equivalently, $y\mapsto \frac{1}{T}\int_0^T\mathbf{1}_{\{z:\gamma \cdot z\leq y\}}(x_s)ds$ is continuous or $\beta \mapsto M_{T,\beta}(x)$ is strictly increasing;
		\item $\alpha\in  \{\alpha:\lim_{\beta \to \alpha}\tau_{M_{T,\beta}}(x)=\tau_{M_{T,\alpha}}(x)\}$
	\end{enumerate}
	then $(M_{T,\alpha}(x^n),\tau_{M_{T,\alpha}}(x^n))\to (M_{T,\alpha}(x),\tau_{M_{T,\alpha}}(x))$.
\end{corollary}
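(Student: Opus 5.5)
The corollary follows by reducing it to Theorem \ref{stopping_quantile_continuity}: I first establish the formula for $\tau_{M_{T,\alpha}}(x)$ on continuous paths, and then check that conditions (1)--(3) give every hypothesis of that theorem.

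\textbf{Step 1: the hitting-time representation.} Let $x\in C_{\mathbb{R}^d}[0,\infty)$ and suppose first that $M_{T,\alpha}(x)>\gamma\cdot x_0$. The map $t\mapsto\sup_{s\leq t}(\gamma\cdot x)_s$ is continuous and nondecreasing, and it starts at $\gamma\cdot x_0<M_{T,\alpha}(x)$. By Lemma \ref{alpha_quantile_bounds}, $M_{T,\alpha}(x)\leq\sup_{s\leq T}(\gamma\cdot x_s)$, so the set over which the infimum is taken is nonempty. At $\tau:=\tau_{M_{T,\alpha}}(x)$, continuity gives $\sup_{s\leq\tau}(\gamma\cdot x)_s=M_{T,\alpha}(x)$. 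The running supremum of a continuous path is attained, so there is some $s\leq\tau$ with $\gamma\cdot x_s=M_{T,\alpha}(x)$. Minimality of $\tau$ forces $s=\tau$, and no $s'<\tau$ can satisfy $\gamma\cdot x_{s'}=M_{T,\alpha}(x)$, since otherwise the running supremum would already reach the level before $\tau$. Hence $\tau=\inf\{t:\gamma\cdot x_t=M_{T,\alpha}(x)\}$.

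The case $M_{T,\alpha}(x)<\gamma\cdot x_0$ is symmetric, using the running infimum. In the case $M_{T,\alpha}(x)=\gamma\cdot x_0$, both sides equal $0$ by definition.

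\textbf{Step 2: applying Theorem \ref{stopping_quantile_continuity}.} By Lemma \ref{continuity_set_continuous_paths}, a continuous $x$ has $U(x,T)=\emptyset$, so in particular $\alpha\notin U(x,T)$. Condition (1) of the corollary is the nondegeneracy assumption $M_{T,\alpha}(x)\neq\gamma\cdot x_0$ of the theorem. Conditions (2) and (3) are exactly the hypotheses of part (2) of the theorem. The equivalence stated in (2) was already noted there, via Proposition \ref{McNeiletal}: a nondecreasing right-continuous function is continuous if and only if its generalized inverse $\beta\mapsto M_{T,\beta}(x)$ is strictly increasing.

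The final assertion of Theorem \ref{stopping_quantile_continuity} then yields joint convergence of $(M_{T,\alpha}(x^n),\tau_{M_{T,\alpha}}(x^n))$.

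\textbf{Main obstacle.} The only step needing care is Step 1: the path must actually attain the level rather than merely approach it, and this is exactly where continuity of the path is used. The rest is bookkeeping of hypotheses.
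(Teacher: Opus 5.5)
Your proof is correct and follows the paper's route. The paper's proof is a single line reducing the corollary to Theorem \ref{stopping_quantile_continuity} via $U(x,T)=\emptyset$ from Lemma \ref{continuity_set_continuous_paths}. Your Step 1 adds a careful check of the hitting-time formula, which the paper states without proof. You also rightly take strict monotonicity of $\beta\mapsto M_{T,\beta}(x)$ from hypothesis (2). The paper's proof suggests it follows from continuity of $x$, which fails when $\gamma\cdot x$ is constant on a subinterval of $[0,T]$, so your handling is the sound one.
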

\begin{proof} This immediately follows from Theorem \ref{stopping_quantile_continuity} and the fact that $U(x,T)=\emptyset$ due to the continuity of $x$ by Lemma \ref{continuity_set_continuous_paths}, which then also implies that $\beta \mapsto M_{T,\beta}(x)$ is continuous and strictly increasing.
\end{proof}

\begin{lemma} \textcolor{black}{
	} Let $T>0,\alpha \in (0,1),\gamma \in \mathbb{R}^d,x \in \mathcal{D}_{\mathbb{R}^d}[0,\infty)$ and $\alpha \notin U(x,T)$ and $M_{T,\alpha}(x)\neq \gamma \cdot x_0$. Then
	\begin{equation}
	\lim_{\beta \to \alpha}\tau_{M_{T,\beta}}(x)=\tau_{M_{T,\alpha}}(x)\iff\begin{cases}
	\lim_{\beta \downarrow \alpha}\tau_{M_{T,\beta}}(x)=\tau_{M_{T,\alpha}}(x)&M_{T,\alpha}(x)>\gamma \cdot x_0\\
	\lim_{\beta \uparrow \alpha}\tau_{M_{T,\beta}}(x)=\tau_{M_{T,\alpha}}(x)&M_{T,\alpha}(x)<\gamma \cdot x_0\\
	\end{cases}
	\end{equation}
\end{lemma}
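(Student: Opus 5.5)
The forward implication is immediate, since a two-sided limit forces each one-sided limit. For the converse, the plan is to show that the "other" one-sided limit is automatic. When $M_{T,\alpha}(x)>\gamma\cdot x_0$ this is the left limit $\beta\uparrow\alpha$; when $M_{T,\alpha}(x)<\gamma\cdot x_0$ it is the right limit $\beta\downarrow\alpha$. Both rest on two facts. First, by Lemma \ref{alpha_quantile_properties}, $\beta\mapsto M_{T,\beta}(x)$ is nondecreasing and left-continuous. Second, since $\alpha\notin U(x,T)$, it is also right-continuous at $\alpha$, so $M_{T,\beta}(x)\to M_{T,\alpha}(x)$ as $\beta\to\alpha$. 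Because $M_{T,\alpha}(x)\neq\gamma\cdot x_0$, for $\beta$ close to $\alpha$ the quantile $M_{T,\beta}(x)$ lies strictly on the same side of $\gamma\cdot x_0$. Hence $\tau_{M_{T,\beta}}(x)$ is given by the same branch of the definition in Theorem \ref{stopping_quantile_continuity} for all such $\beta$.

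Take the case $M_{T,\alpha}(x)>\gamma\cdot x_0$ and write $S_t:=\sup_{s\le t}(\gamma\cdot x)_s$, which is nondecreasing and càdlàg. Then $\tau_{M_{T,\beta}}(x)=\inf\{t>0:S_t\ge M_{T,\beta}(x)\}$ is the generalized inverse $S^{-1}$ evaluated at $M_{T,\beta}(x)$. By Proposition \ref{McNeiletal}, $S^{-1}$ is nondecreasing and left-continuous. Since $M_{T,\beta}(x)$ is nondecreasing in $\beta$, so is $\beta\mapsto\tau_{M_{T,\beta}}(x)$, and both one-sided limits exist. As $\beta\uparrow\alpha$, $M_{T,\beta}(x)\uparrow M_{T,\alpha}(x)$ (possibly with equality). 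Left-continuity of $S^{-1}$, or trivially constancy if the values eventually equal $M_{T,\alpha}(x)$, gives $\tau_{M_{T,\beta}}(x)\to\tau_{M_{T,\alpha}}(x)$. Combined with the hypothesised right limit, this yields the two-sided limit.

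The case $M_{T,\alpha}(x)<\gamma\cdot x_0$ is symmetric with $I_t:=\inf_{s\le t}(\gamma\cdot x)_s$, which is nonincreasing. Here $\tau_{M_{T,\beta}}(x)=\inf\{t>0:-I_t\ge -M_{T,\beta}(x)\}$ is $(-I)^{-1}$ evaluated at $-M_{T,\beta}(x)$. As $\beta\downarrow\alpha$, right-continuity of the quantile at $\alpha$ (from $\alpha\notin U(x,T)$) gives $-M_{T,\beta}(x)\uparrow -M_{T,\alpha}(x)$. Left-continuity of $(-I)^{-1}$ then gives the right limit automatically, while the left limit is the hypothesis.

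The main obstacle is applying left-continuity of the generalized inverse only to increasing approach from below. We must also verify that the argument tends to $M_{T,\alpha}(x)$ itself, which is exactly where $\alpha\notin U(x,T)$ is used in the second case. Finally, we must check that the $\inf\{t>0:\cdot\}$ convention does not disturb Proposition \ref{McNeiletal}. This holds because $S_0=\gamma\cdot x_0<M_{T,\beta}(x)$ near $\alpha$, so the constraint $t>0$ is inactive.
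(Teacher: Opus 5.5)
Your proposal is correct and follows the same route as the paper. Continuity of $\beta\mapsto M_{T,\beta}(x)$ at $\alpha$, combined with the one-sided continuity of the level-to-hitting-time map, makes the other one-sided limit automatic. That map is nondecreasing and left-continuous for the running supremum, and nonincreasing and right-continuous for the running infimum, which you obtain equivalently via $(-I)^{-1}$. You also make explicit two details the paper leaves implicit: that $M_{T,\beta}(x)$ stays in the same branch of the definition for $\beta$ near $\alpha$, and that the $t>0$ convention does not interfere with Proposition \ref{McNeiletal}.
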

\begin{proof} Since $\alpha \notin U(x,T)$, then $M_{T,\beta}(x)\to M_{T,\alpha}(x)$ as $\beta \to \alpha$. The claim follows from the definition of $x\mapsto \tau_{M_{T,\alpha}}(x)$ as in Theorem \ref{stopping_quantile_continuity}: if $M_{T,\alpha}(x)> \gamma \cdot x_0$, then $\alpha \mapsto \tau_{M_{T,\alpha}}(x)=\inf\{t>0:\sup_{s\leq t}(\gamma \cdot x)_s\geq M_{T,\alpha}(x)\}$. Since $\nu\mapsto \inf\{t>0:\sup_{s\leq t}(\gamma \cdot x)_s\geq \nu\}$ is nondecreasing left-continuous, we have that $\lim_{\beta \to \alpha}\tau_{M_{T,\beta}}(x)=\tau_{M_{T,\alpha}}(x)$ if and only if $\lim_{\beta \downarrow \alpha}\tau_{M_{T,\beta}}(x)=\tau_{M_{T,\alpha}}(x)$. Similarly, if $M_{T,\alpha}(x)< \gamma \cdot x_0$, then $\alpha \mapsto \tau_{M_{T,\alpha}}(x)=\inf\{t>0:\inf_{s\leq t}(\gamma \cdot x)_s\leq M_{T,\alpha}(x)\}$. Since $\nu\mapsto \inf\{t>0:\inf_{s\leq t}(\gamma \cdot x)_s\leq \nu\}$ is nonincreasing càd, we have that $\lim_{\beta \to \alpha}\tau_{M_{T,\beta}}(x)=\tau_{M_{T,\alpha}}(x)$ if and only if $\lim_{\beta \uparrow \alpha}\tau_{M_{T,\beta}}(x)=\tau_{M_{T,\alpha}}(x)$.
\end{proof}

\section{Brownian hyperplane $\alpha$-quantile}

In this section we finally introduce the main propositions regarding the hyperplane $\alpha$-quantile applied to Brownian paths which will be needed to prove Theorem \ref{weak_convergence_BM_alpha_quantile_time}. The proof of these results rely upon explicit functional forms of the joint density of the Brownian time functional and the Brownian hyperplane $\alpha$-quantile which are found in \cite{Dassios_quantileBMstopping}; we will also mention distributional results of the marginal density of the Brownian $\alpha$-quantile which can be found in \cite{Dassios_quantileBM}. 
\textcolor{black}{We will first show a straightforward application of the aforementioned results for the expected value of the Brownian hyperplane $\alpha$-quantile.}\textcolor{black}{ 
	\begin{example} Let $\gamma \in \mathbb{R}^d$. Suppose $X=\Sigma W$ where $W$ is a standard $\mathbb{R}^d$-Brownian motion and $\Sigma$ is such that $\Sigma \Sigma'$ is a variance-covariance matrix and let $\alpha \in (0,1)$. By a.s. continuity of Brownian paths, Theorem \ref{weak_convergence_alpha_quantile} holds. We also have that
		\begin{equation}
		E[M_{t,\alpha}(X)]=\|\gamma'\Sigma\|\frac{\sqrt{2\alpha t}-\sqrt{2(1-\alpha) t}}{\sqrt{\pi}}
		\end{equation}
		and $M_{t,\beta}(X)\to^{L^1}M_{t,\alpha}(X)$ as $\beta \downarrow \alpha$.
	\end{example}
	\begin{indeed} Note that $\gamma \cdot X$ is a (scaled) one-dimensional Brownian motion in the filtration of $W$ with constant volatility parameter $\|\gamma'\Sigma\|$; this can be seen e.g. by Lévy characterisation. Define $\sigma:=\|\gamma' \Sigma\|$. Denote with $\overline{Y}_s:=\sup_{u\leq s}(\gamma \cdot X)_u$ and $\underline{Y}_s:=\inf_{u\leq s}(\gamma \cdot X)_u$. By symmetry and reflection principle we have $E[\overline{Y}_s]=-E[\underline{Y}_s]=\sigma \sqrt{2s/\pi},\,\forall s \geq 0$. Now, also note again that 
		$\mathbf{1}_{\{z:\gamma \cdot z\leq y\}}(X_s)=\mathbf{1}_{(-\infty,y]}(\gamma \cdot X_s)$; by (\cite{Dassios_quantileBM}; Th.2. p. 390), we have $M_{t,\beta}(X)\sim \overline{Y}^{(1)}_{\beta t}+\underline{Y}^{(2)}_{(1-\beta) t}$ where $\overline{Y}_s^{(1)}:=\sup_{u\leq s}(\gamma \cdot X^{(1)})_u$, $\underline{Y}_s^{(2)}:=\inf_{u\leq s}(\gamma \cdot X^{(2)})_u$ and $X^{(1)},X^{(2)}$ are IID copies of $X$. The first claim follows. By using the fact that $\beta \mapsto M_{t,\beta}(X)$ is nondecreasing, we get
		\begin{eqnarray}
		E[|M_{t,\beta}(X)-M_{t,\alpha}(X)|]&=&E[M_{t,\beta}(X)]-E[M_{t,\alpha}(X)]\nonumber\\
		&=&\sigma\frac{\sqrt{2\beta t}-\sqrt{2(1-\beta) t}-\sqrt{2\alpha t}+\sqrt{2(1-\alpha) t}}{\sqrt{\pi}}\nonumber\\
		&\stackrel{\beta \downarrow \alpha}{\to}&0
		\end{eqnarray}
		and the second claim follows.
\end{indeed}}

\textcolor{black}{For simplicity of exposition,} in the following, $X=\Sigma W$ where $\Sigma$ is such that $\Sigma \Sigma'$ is a (nondegenerate) variance-covariance matrix and $W$ is a standard $\mathbb{R}^d$-Brownian motion.

\begin{proposition} 
	\label{BM_joint_continuity_occupation_time}	
	Let $T>0,\gamma \in \mathbb{R}^d/\{0\}$. Then $y\mapsto \frac{1}{T}\int_0^T\mathbf{1}_{\{z:\gamma \cdot  z\leq y\}}(X_s)ds$ is $P$-a.s. continuous. Therefore, equivalently, $(0,1)\ni \alpha \mapsto M_{T,\alpha}(X)$ is strictly increasing $P$-a.s.
\end{proposition}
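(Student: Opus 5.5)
The plan is to reduce everything to a one-dimensional Brownian motion and then use the absence of level sets with positive Lebesgue measure. As in the Example, set $\sigma:=\|\gamma'\Sigma\|$. Since $\gamma\neq 0$ and $\Sigma\Sigma'$ is nondegenerate, $\sigma^2=\gamma'\Sigma\Sigma'\gamma>0$. By Lévy's characterisation, $B:=\gamma\cdot X$ is a Brownian motion with volatility parameter $\sigma$. Moreover, $\mathbf{1}_{\{z:\gamma\cdot z\le y\}}(X_s)=\mathbf{1}_{(-\infty,y]}(B_s)$, so the map in question is
\[
F(y):=\frac1T\lambda(\{s\in[0,T]:B_s\le y\}).
\]
This is a distribution function: it is nondecreasing and right-continuous by Lemma \ref{alpha_quantile_properties}. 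Its left limit at $y$ is $\frac1T\lambda(\{s\le T:B_s<y\})$, so the jump of $F$ at $y$ equals $\frac1T\lambda(\{s\le T:B_s=y\})$. Continuity of $F$ for all $y$ simultaneously, on one null set, is therefore equivalent to
\[
P\big(\exists y\in\mathbb{R}:\lambda(\{s\le T:B_s=y\})>0\big)=0.
\]

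The key step is proving this display, and the uniformity over all $y$ is the main obstacle. For a fixed $y$, Fubini gives $E[\lambda(\{s\le T:B_s=y\})]=\int_0^TP(B_s=y)\,ds=0$, since $B_s$ is Gaussian with positive variance for $s>0$. This alone does not handle the uncountably many $y$ at once. I would use the occupation density (local time) of $B$. A cleaner route avoiding local time is the following. The occupation measure $\mu(A)=\lambda(\{s\le T:B_s\in A\})$ is absolutely continuous with respect to Lebesgue measure almost surely. To show this, note that
\[
E\int_{\mathbb{R}}|\hat\mu(\xi)|^2d\xi=\int\!\!\int_{[0,T]^2}\int_{\mathbb{R}}e^{-\sigma^2\xi^2|t-s|/2}\,d\xi\,ds\,dt=\int\!\!\int_{[0,T]^2}\sqrt{2\pi}\,(\sigma^2|t-s|)^{-1/2}\,ds\,dt<\infty.
\]
Hence $\hat\mu\in L^2$ almost surely, so by Plancherel $\mu$ has an $L^2$ density almost surely. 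An absolutely continuous measure has no atoms, so $\mu(\{y\})=0$ for every $y$ on this single full-measure event. Equivalently, one may cite the existence of the Brownian local time $L^y_T$ with $\mu(dy)=L_T^y\,dy$. Either way $F$ is $P$-a.s. continuous.

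Finally, the equivalence with strict monotonicity follows from Proposition \ref{McNeiletal}(2): $\alpha\mapsto M_{T,\alpha}(X)$ is the generalized inverse of $F$, and $F$ is continuous if and only if its generalized inverse is strictly increasing. This is exactly the equivalence already recorded in Theorem \ref{stopping_quantile_continuity}(2) and Corollary \ref{stopping_quantile_continuity_C}(2), so the second statement is immediate on the same full-measure event.
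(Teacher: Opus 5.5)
Your proof is correct, but it reaches the key fact by a different route from the paper. The paper's proof is essentially a citation. It invokes the a.s. existence and finiteness, simultaneously for all $y$, of the Brownian local time $\lim_{\varepsilon\to 0}\frac{1}{2\varepsilon}\int_0^T\mathbf{1}_{(y-\varepsilon,y+\varepsilon]}(\gamma\cdot X_s)\,ds$ (Karatzas--Shreve). Absence of jumps of the right-continuous function $F$ then follows implicitly: a jump of $F$ at $y$ would force this limit to be $+\infty$.

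You make the jump structure explicit, $F(y)-F(y^-)=\frac1T\lambda(\{s\le T:B_s=y\})$. This reduces the claim to the occupation measure $\mu$ being atomless. You then prove the stronger statement that $\mu$ is absolutely continuous, via $E\int|\hat\mu|^2\,d\xi<\infty$ and Plancherel (the classical Fourier-analytic argument of Berman). Your Gaussian computation and the Fubini--Tonelli interchanges are fine. The only step you leave implicit is the standard fact that a finite measure with square-integrable Fourier transform has an $L^2$ density. Your fallback of citing $\mu(dy)=L^y_T\,dy$ is essentially the paper's route.

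The paper's route buys brevity, and through the cited theorem a finite local time at every level. Yours buys self-containedness. It uses only the characteristic function of Gaussian increments, and it handles the uncountably many levels at once through absolute continuity of a single random measure. This shows that only the existence of an $L^2$ occupation density is needed, not finiteness of local time at every level. It also extends verbatim to any process with $\int_0^T\int_0^T\int_{\mathbb{R}}|E[e^{i\xi(Y_t-Y_s)}]|\,d\xi\,ds\,dt<\infty$.

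You also correctly make explicit that $\sigma^2=\gamma'\Sigma\Sigma'\gamma>0$ needs both nondegeneracy of $\Sigma\Sigma'$ and $\gamma\neq 0$. The paper uses this silently when it normalises $\|\gamma'\Sigma\|=1$.
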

\begin{proof} We know that $\gamma \cdot X$ is a Brownian motion in the filtration of $W$ with volatility parameter $\|\gamma'\Sigma\|$. Without loss of generality let $\|\gamma'\Sigma\| =1$. The local time $y \mapsto \lim_{\varepsilon \to 0}\frac{1}{2\varepsilon}\int_0^T\mathbf{1}_{(y-\varepsilon,y+\varepsilon]}(\gamma \cdot X_s(\omega))ds$ exists and is finite for all $y$ for $P$-almost always $\omega \in \Omega$ (see e.g. (\cite{KaratzasShreve}; 6.5. p. 203)), and therefore $y\mapsto \frac{1}{T}\int_0^T\mathbf{1}_{\{z:\gamma \cdot  z\leq y\}}(X_s)ds$, since it is càd, is continuous $P$-a.s. in $y$. The final claim follows from Proposition \ref{McNeiletal}.
\end{proof}

The subsequent proposition makes it clear that the previous result, while intuitive, is not easily proved with standard techniques and our argument necessarily relies on a deep property of the local times of Brownian motion.

\begin{proposition}{(\cite{Sato}; 21.2. p. 135)}
	\label{piecewise_constant_Poisson}	
	If $Y$ is a Lévy process, then $t\mapsto Y_t(\omega)$ is piecewise constant for $P$-almost always $\omega \in \Omega$ if and only if $Y$ is a compound Poisson process or $Y=0$ $P$-a.s.
\end{proposition}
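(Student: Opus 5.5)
We need to prove Proposition \ref{piecewise_constant_Poisson}: a Lévy process $Y$ has a.s. piecewise constant paths iff $Y$ is compound Poisson or $Y=0$ a.s. The plan is to work through the Lévy–Khintchine triplet $(A,\nu,b)$ and the Lévy–Itô decomposition.

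\emph{($\Leftarrow$).} If $Y=0$ a.s. there is nothing to prove. If $Y$ is compound Poisson, I write $Y_t=\sum_{k\le N_t}\xi_k$, where $N$ is a Poisson process and the $\xi_k$ are i.i.d. The jump times of $N$ are a.s. locally finite, since on each bounded interval $N_t<\infty$ a.s. Between consecutive jump times $Y$ is constant, so its paths are a.s. piecewise constant (càdlàg step functions).

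\emph{($\Rightarrow$).} I read "piecewise constant" as: on every bounded interval there are finitely many jumps, and the path is constant between them. Suppose $Y$ has this property a.s. I would carry out three steps.

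1. \textbf{Finite Lévy measure.} The number of jumps of $Y$ in $[0,1]$ is a Poisson random variable with mean $\nu(\mathbb{R}^d\setminus\{0\})$; if this mean is infinite, the count is a.s. infinite. Since piecewise constancy gives a.s. finitely many jumps on $[0,1]$, this forces $\nu(\mathbb{R}^d)<\infty$.

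2. \textbf{Reduction to a continuous Lévy process.} With $\nu$ finite, the Lévy–Itô decomposition gives $Y_t=Z_t+\sum_{s\le t}\Delta Y_s$. Here the jump sum $\sum_{s\le t}\Delta Y_s$ is a compound Poisson process with Lévy measure $\nu$, and $Z_t=\tilde b t+\sqrt{A}B_t$ is a Brownian motion with drift $\tilde b=b-\int_{\|z\|\le1}z\,\nu(dz)$, independent of the jump part.

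3. \textbf{Identifying $Z$.} The jump sum is piecewise constant, so $Z$ is piecewise constant. But $Z$ is also continuous, and a continuous path that is constant between finitely many points of $[0,t]$ is constant on all of $[0,t]$. Hence $Z_t=Z_0=0$ for all $t$ a.s. Since $Z_t\sim N(\tilde b t,At)$, this gives $A=0$ and $\tilde b=0$. Therefore $Y$ is compound Poisson when $\nu\neq0$, and $Y=0$ a.s. when $\nu=0$.

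The main obstacle is Step 1, namely showing rigorously that an infinite Lévy measure produces infinitely many jumps in finite time. I would handle it through the jump point process: $\#\{s\le1:\Delta Y_s\in B\}$ is Poisson with mean $\nu(B)$ for each $B$ bounded away from $0$. Taking $B_\epsilon=\{\|z\|>\epsilon\}$ and letting $\epsilon\downarrow0$, the count is Poisson with mean $\nu(B_\epsilon)\uparrow\infty$, so $P(\text{count}\ge K)\to1$ for every $K$. By monotonicity in $\epsilon$, the total jump count on $[0,1]$ is then a.s. infinite.
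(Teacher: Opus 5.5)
Your proof is correct. The paper gives no argument of its own here and simply cites Sato's Theorem 21.2; your route is essentially the standard textbook proof behind that citation: finiteness of $\nu$ from the a.s. finite jump count of the Poisson jump measure, then the Lévy--Itô split $Y_t=Z_t+\sum_{s\le t}\Delta Y_s$, then the fact that a continuous, piecewise constant $Z$ vanishes, which forces $A=0$ and $\tilde b=0$. One small point: if ``compound Poisson'' is defined through the characteristic exponent rather than constructively, your representation $Y_t=\sum_{k\le N_t}\xi_k$ in the ($\Leftarrow$) direction holds only in law, but that is harmless, because the Lévy--Itô decomposition with $A=0$ and $\tilde b=0$ gives $Y_t=\sum_{s\le t}\Delta Y_s$ pathwise.
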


\begin{proposition} 
	\label{counter_joint_continuity_occupation_time_2}	
	\textcolor{black}{
	} Let $T>0,\gamma \in \mathbb{R}^d$. Let $D=\{b_1,b_2,...\}\subset \mathbb{R}$ be a countable dense subset. Then the condition $P(\cup_{k \in \mathbb{N}}\{\lambda(\{s\in [0,T]:\gamma \cdot Y_s=b_k\})>0\})=0$ alone does not imply that $y\mapsto \frac{1}{T}\int_0^T\mathbf{1}_{\{z:\gamma \cdot z\leq y\}}(Y_s)ds$ is $P$-a.s. continuous.
\end{proposition}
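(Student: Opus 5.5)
The statement is a non-implication, so I will prove it by exhibiting a counterexample. The counterexample will be a càdlàg process $Y$ for which, almost surely, the level sets of $\gamma\cdot Y$ at every point of the given countable dense set $D$ are Lebesgue-null, while the occupation function $y\mapsto \frac{1}{T}\int_0^T\mathbf{1}_{(-\infty,y]}(\gamma\cdot Y_s)ds$ still has a jump with positive probability.

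The natural candidate is suggested by Proposition \ref{piecewise_constant_Poisson}. Fix $\gamma\neq 0$ and take $Y$ such that $\gamma\cdot Y$ is a compound Poisson process started at zero, shifted so that it avoids $D$. Concretely, set $Y_t=(c+Z_t)\gamma/\|\gamma\|^2$, where $Z$ is a one-dimensional compound Poisson process whose jump law is absolutely continuous (for instance standard normal), and $c\notin D$. Then $\gamma\cdot Y_t=c+Z_t$.

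1. By Proposition \ref{piecewise_constant_Poisson}, the paths of $\gamma\cdot Y$ are a.s. piecewise constant. On $[0,T]$ they have finitely many jumps, so they take finitely many values, say $v_0=c, v_1,\dots,v_N$, each held on an interval. Hence the occupation measure of $\gamma\cdot Y$ on $[0,T]$ is purely atomic. Its distribution function $y\mapsto\frac1T\int_0^T\mathbf{1}_{(-\infty,y]}(\gamma\cdot Y_s)ds$ is a step function with a jump at $c$ of size at least $\tau_1\wedge T/T$, where $\tau_1$ is the first jump time. Since $\tau_1>0$ a.s., this function is a.s. discontinuous, and in particular not a.s. continuous.

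2. I then check the hypothesis. The visited values are $c$ and $c+\xi_1+\dots+\xi_j$ for $j\ge1$. The value $c$ lies outside $D$ by choice. For each fixed $k$ and $j$, $P(c+\xi_1+\dots+\xi_j=b_k)=0$, because the sum has a density. Taking a countable union over $k$ and $j$, almost surely no visited value lies in $D$. So almost surely $\lambda(\{s\in[0,T]:\gamma\cdot Y_s=b_k\})=0$ for every $k$, which is exactly the assumed condition.

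The only subtle point, and the expected obstacle, is that $D$ is arbitrary and dense. It is handled by the fact that a countable union of null events is null: a fixed countable $D$ can be avoided almost surely by a process visiting countably many absolutely continuously distributed levels. This holds even though every neighbourhood of each visited level meets $D$.

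Finally, to connect with the preceding remark, the example shows that the null-level-set property on a dense set controls only the levels in $D$ and says nothing about atoms off $D$. This is why Proposition \ref{BM_joint_continuity_occupation_time} needs the joint continuity of Brownian local time, which gives information at every level simultaneously.
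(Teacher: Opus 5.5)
Your proof is correct and takes essentially the paper's approach: $\gamma\cdot Y$ is compound Poisson with absolutely continuous jumps, so its visited levels a.s. avoid $D$, while its occupation function is a step function. Your shift by $c\notin D$, and locating the jump at the starting level via $\tau_1>0$, also repair a gap in the paper's version. A compound Poisson process started at $0$ satisfies $P(\gamma\cdot Y_t=0)>0$ for every $t$, so the paper's assumed absolutely continuous law at all $t$ cannot hold, and its hypothesis fails whenever $0\in D$.
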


\begin{proof} If $\gamma \cdot Y$ is compound Poisson process-distributed with a.s. nonnegative increments and with Lebesgue absolutely continuous density at all $t$ then it is a counterexample which shows the claim. Indeed, for any $b_k \in D$ we have $E[\lambda(\{s\in [0,T]:\gamma \cdot Y_s=b_k\})]=0$ because $P(\gamma \cdot Y_t=b_k)=0,\forall t>0,\forall k \in \mathbb{N}$ so it follows that
	\begin{equation}P(\cup_{k \in \mathbb{N}}\{\lambda(\{s\in [0,T]:\gamma \cdot Y_s=b_k\})>0\})\leq \sum_{k \in \mathbb{N}}P(\lambda(\{s\in [0,T]:\gamma \cdot Y_s=b_k\})>0)=0
	\end{equation}
	But $\gamma \cdot Y$ is a.s. piecewise constant, so $y\mapsto \frac{1}{T}\int_0^T\mathbf{1}_{\{z:\gamma \cdot z\leq y\}}(Y_s)ds$ is a.s. discontinuous in at least one point in $y$. Indeed, suppose $x\in \mathcal{D}[0,\infty)$ and $x$ is nondecreasing piecewise constant, that is there exist $0<t_1<...<t_k<...$ such that $x$ is constant on $[t_k,t_{k+1})$. Without loss of generality suppose $x$ has at least two consecutive jump-times $0<u_1<u_2<T$.
	So $x_{u_1}=x_{u_1^-}+\Delta x_{u_1}$, and then $\lim_{y \uparrow x_{u_1}}\int_{0}^T\mathbf{1}_{(-\infty,y]}(x_s)ds<\int_{0}^T\mathbf{1}_{(-\infty,x_{u_1}]}(x_s)ds$ because $x_s=x_{u_1}$ for all $s \in [u_1,u_2)$ and so $\int_{0}^T\mathbf{1}_{(-\infty,y]}(x_s)ds$ is constant for all $x_{u^-}\leq y<x_{u_1}$. We conclude.
\end{proof}

\begin{lemma} 
	\label{BM_as_nonzero_alpha_quantile}	
	Let $T>0,\gamma \in \mathbb{R}^d/\{0\},z \in \mathbb{R}$. Then, we have $P(M_{T,\alpha}(X)=z)=0,\,\forall \alpha \in (0,1)$.
\end{lemma}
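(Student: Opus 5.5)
We reduce to one dimension. As in the Example and in Proposition \ref{BM_joint_continuity_occupation_time}, $\gamma\cdot X$ is a one-dimensional Brownian motion with volatility parameter $\sigma:=\|\gamma'\Sigma\|$. Since $\Sigma\Sigma'$ is nondegenerate and $\gamma\neq 0$, we have $\sigma>0$. Moreover $M_{T,\alpha}(X)$ depends on $X$ only through $\gamma\cdot X$, because $\mathbf{1}_{\{z:\gamma\cdot z\le y\}}(X_s)=\mathbf{1}_{(-\infty,y]}(\gamma\cdot X_s)$. So it suffices to show that the $\alpha$-quantile of $\sigma B$, with $B$ a standard Brownian motion, has no atoms. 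Scaling ($M_{T,\alpha}(\sigma B)=\sigma M_{T,\alpha}(B)$) lets us take $\sigma=1$.

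The plan is to invoke the distributional identity already used in the Example, namely (\cite{Dassios_quantileBM}; Th.2. p. 390):
$$M_{T,\alpha}(X)\sim \overline{Y}^{(1)}_{\alpha T}+\underline{Y}^{(2)}_{(1-\alpha)T},$$
where the two summands are independent. Here $\overline{Y}^{(1)}_{\alpha T}$ is the running maximum up to time $\alpha T$ of one Brownian motion, and $\underline{Y}^{(2)}_{(1-\alpha)T}$ is the running minimum up to time $(1-\alpha)T$ of an independent copy.

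By the reflection principle, $\overline{Y}^{(1)}_{\alpha T}\sim|B_{\alpha T}|$, which has a Lebesgue density on $[0,\infty)$ because $\alpha T>0$. The sum of independent random variables, one of which is absolutely continuous, is itself absolutely continuous. Indeed, for any $z$,
$$P(\overline{Y}^{(1)}_{\alpha T}+\underline{Y}^{(2)}_{(1-\alpha)T}=z)=E\big[P(\overline{Y}^{(1)}_{\alpha T}=z-\underline{Y}^{(2)}_{(1-\alpha)T}\mid \underline{Y}^{(2)})\big]=0$$
by Fubini and independence. This gives $P(M_{T,\alpha}(X)=z)=0$ for every $z$ and every $\alpha\in(0,1)$. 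The argument uses $\alpha\in(0,1)$, so that both time lengths are strictly positive, and $\gamma\neq 0$ together with nondegeneracy, so that $\sigma>0$. In the excluded case $\sigma=0$ the quantile would be identically $0$.

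The only real obstacle is justifying the identity in law in the hyperplane setting. This is the Dassios/Embrechts–Rogers–Yor identity applied to the one-dimensional Brownian motion $\gamma\cdot X$; care is needed that it holds for the specific left-continuous generalized-inverse definition of $M_{T,\alpha}$. The paper's definition coincides with the one in \cite{Dassios_quantileBM}. In any case, two alternative definitions could differ only for $\alpha$ in the set $U(X,T)$, which is empty a.s. by Lemma \ref{continuity_set_continuous_paths}, so the identity transfers.

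As a fallback, independent of that reference, one could argue directly. For fixed $z$, on the event $\{M_{T,\alpha}(X)=z\}$, Proposition \ref{BM_joint_continuity_occupation_time} gives that the occupation function is continuous and strictly increasing in the relevant range. Hence the event reduces to $\frac1T\int_0^T\mathbf{1}_{(-\infty,z]}(\gamma\cdot X_s)ds=\alpha$. It would then remain to show that the occupation time below a fixed level has an atomless law, for instance via the arcsine-type density of $\int_0^T\mathbf{1}_{(-\infty,z]}(B_s)ds$. This fallback is more delicate, so I would rely on the decomposition above.
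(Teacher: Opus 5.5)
Your proof is correct and follows essentially the same route as the paper. Both reduce to the one-dimensional Brownian motion $\gamma\cdot X$ (with $\|\gamma'\Sigma\|>0$) and invoke Dassios's distributional results; the difference is that the paper directly quotes the existence of a Lebesgue density of $M_{T,\alpha}(X)$ from \cite{Dassios_quantileBM}, while you derive atomlessness from the sup-plus-independent-inf representation (the one used in the paper's Example) via the reflection principle and a Fubini/independence argument, and you also check, which the paper leaves implicit, that its left-continuous generalized inverse agrees with the literature's definition because $U(X,T)=\emptyset$ a.s.\ by Lemma \ref{continuity_set_continuous_paths}.
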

\begin{proof} Recall $\gamma \cdot X$ is a one dimensional (scaled by constant volatility $\|\gamma'\Sigma\|$) Brownian motion in the filtration of $W$. The claim immediately follows from the fact that $M_{T,\alpha}(X)$ has a Lebesgue density for any $\alpha\in (0,1),T>0$ (see e.g. (\cite{Dassios_quantileBM}; Th.1. p. 390)).
\end{proof}

\begin{proposition}
	\label{BM_continuity_random_times}
	Let $\alpha \in (0,1),T>0,\gamma \in \mathbb{R}^d/\{0\}$; then $P(\lim_{\beta \to \alpha}\tau_{M_{T,\beta}}(X)=\tau_{M_{T,\alpha}}(X))=1$.
\end{proposition}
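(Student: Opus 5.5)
We reduce to a one-dimensional Brownian motion and exploit the fact that hitting times of levels are continuous in the level at levels that are not local extrema of the path. As in Proposition \ref{BM_joint_continuity_occupation_time}, $B:=\gamma\cdot X$ is a Brownian motion with volatility $\sigma=\|\gamma'\Sigma\|>0$. Since $\mathbf{1}_{\{z:\gamma\cdot z\le y\}}(X_s)=\mathbf{1}_{(-\infty,y]}(B_s)$, both $M_{T,\beta}(X)$ and $\tau_{M_{T,\beta}}(X)$ are functionals of the path of $B$ alone. By Lemma \ref{BM_as_nonzero_alpha_quantile}, a.s. $M_{T,\alpha}(X)\neq 0=B_0$. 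By Lemma \ref{continuity_set_continuous_paths}, a.s. $\beta\mapsto M_{T,\beta}(X)$ is continuous at $\alpha$, and by Proposition \ref{BM_joint_continuity_occupation_time} it is strictly increasing. For continuous paths, $\tau_{M_{T,\beta}}=T_{M_{T,\beta}}$, where $T_\nu:=\inf\{t:B_t=\nu\}$ is the first hitting time of level $\nu$.

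It therefore suffices to show that a.s. $\nu\mapsto T_\nu$ is continuous at $\nu=m:=M_{T,\alpha}(X)$. Take the case $m>0$; the case $m<0$ is symmetric via $-B$. The map $\nu\mapsto T_\nu$ on $(0,\infty)$ is nondecreasing and left-continuous, so by the preceding lemma only right-continuity at $m$ matters. Right-continuity fails exactly when $T_{m+}>T_m$, that is, when the running maximum $\overline{B}$ stays at $m$ on a nondegenerate interval after $T_m$. Equivalently, $m$ is a value of $B$ at a time $T_m$ that is a local maximum of the path from the right, i.e. $\sup_{[T_m,T_m+\epsilon]}B=m$ for some $\epsilon>0$. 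For each fixed level $\nu$, the strong Markov property at $T_\nu$ and the fact that Brownian motion started at $\nu$ immediately exceeds $\nu$ give $T_{\nu+}=T_\nu$ a.s. The same holds for the infimum branch.

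The main obstacle is that $m$ is random and depends on the whole path up to $T$, so we cannot apply the fixed-level statement directly: the countable set of levels at which $\nu\mapsto T_\nu$ jumps is itself random. I would argue as follows. The discontinuity levels $J:=\{\nu>0:T_{\nu+}>T_\nu\}$ form a countable random set, and each of them equals the value of $B$ at a local maximum time. A Brownian path has a.s. countably many local maxima, and their values $\{\nu_k\}$ can be enumerated measurably (for instance, as maxima over rational intervals $[p,q]$). Then
\[
P(m\in J)\le\sum_{p<q\in\mathbb{Q}}P\Bigl(M_{T,\alpha}(X)=\max_{[p,q]}B\Bigr),
\]
and it remains to show that each term vanishes.

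For each term I would use the joint law of $(M_{T,\alpha},\tau_{M_{T,\alpha}})$ from \cite{Dassios_quantileBMstopping}, or the Dassios representation $M_{T,\alpha}\sim\overline{Y}^{(1)}_{\alpha T}+\underline{Y}^{(2)}_{(1-\alpha)T}$. The more direct route is the following. Fix $p<q$ and condition on the path on $[p,q]$. The quantile $M_{T,\alpha}$ still depends nontrivially, through the occupation measure, on the path outside $[p,q]$, for example on the increments after $q$. Perturbing that independent part shifts $M_{T,\alpha}$ continuously with absolutely continuous conditional law, while $\max_{[p,q]}B$ stays fixed. Hence $P(M_{T,\alpha}=\max_{[p,q]}B)=0$. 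Justifying this conditional absolute continuity, possibly by a Girsanov shift of the path after $q$ that changes the occupation density smoothly, is the step I expect to require the most care. If it becomes too delicate, the fallback is to read off directly from the explicit joint density of \cite{Dassios_quantileBMstopping} that $\tau_{M_{T,\alpha}}$ is a.s. not the left end of a flat stretch of the running maximum, which gives the claim.
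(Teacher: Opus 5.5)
Your reduction is sound: since $\beta\mapsto M_{T,\beta}(X)$ is a.s. continuous and strictly increasing, the claim is equivalent to a.s. continuity of $\nu\mapsto T_\nu$ at the random level $m=M_{T,\alpha}(X)$. Every jump level of $\nu\mapsto T_\nu$ is also of the form $\max_{[p,q]}B$ with $p<q$ rational. But this only trades one random level for countably many. The difficulty is showing $P(M_{T,\alpha}(X)=\max_{[p,q]}B)=0$, and you do not prove it.

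Moreover, the mechanism you propose for that step is false as stated. Take $p=0$ and $\alpha T<q<T$, and perturb only the increments after $q$, so that you condition on the path up to $q$. The conditional law of $M_{T,\alpha}(X)$ is then \emph{not} absolutely continuous. Let $y^*$ be the smallest level with $\int_0^q\mathbf{1}_{\{B_s\le y^*\}}ds=\alpha T$. On $\{B_q>y^*\}$ the path stays above $y^*$ on $[q,T]$ with positive conditional probability, and on that event $M_{T,\alpha}(X)=y^*$ exactly. This is an atom at a level measurable with respect to the path up to time $q$. So you would have to show separately that such atoms a.s. avoid $\max_{[p,q]}B$, which is a problem of the same kind as the one you started with. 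Also, when $q\ge T$ the increments after $q$ do not enter $M_{T,\alpha}$ at all.

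Your fallback does not work either. The joint density of $(M_{T,\alpha}(X),\tau_{M_{T,\alpha}}(X))$ at a single $\alpha$ cannot tell you whether $\tau_{M_{T,\alpha}}(X)$ is the left end of a flat stretch of the running maximum. That is a property of the path across levels, not of one bivariate law.

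The missing idea is to use the distributional information across the whole family in $\beta$, together with monotonicity; this is what the paper does. Set $g(\beta):=E[\mathbf{1}_{\{M_{T,\beta}(X)>0\}}\tau_{M_{T,\beta}}(X)]$. On $\{M_{T,\alpha}(X)>0\}$ and for $\beta>\alpha$ you have $M_{T,\beta}(X)>M_{T,\alpha}(X)>0$, hence $\tau_{M_{T,\beta}}(X)\ge\tau_{M_{T,\alpha}}(X)$. Markov's inequality and $\{M_{T,\alpha}(X)>0\}\subseteq\{M_{T,\beta}(X)>0\}$ then give
\[
P\bigl(\{\tau_{M_{T,\beta}}(X)>\tau_{M_{T,\alpha}}(X)+\varepsilon\}\cap\{M_{T,\alpha}(X)>0\}\bigr)\le\varepsilon^{-1}\bigl(g(\beta)-g(\alpha)\bigr).
\]
From the joint density in \cite{Dassios_quantileBMstopping} (with $T=1$ and unit volatility) one computes $g(\beta)=\frac{1}{\pi}\bigl(\arcsin\sqrt{\beta}-\sqrt{\beta(1-\beta)}\bigr)$, which is continuous in $\beta$. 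So the right-hand side vanishes as $\beta\downarrow\alpha$. Since $\beta\mapsto\tau_{M_{T,\beta}}(X)$ is monotone there, convergence in probability upgrades to a.s. convergence of the right limit. The left limit follows from left-continuity of $\nu\mapsto T_\nu$, as you noted. The event $\{M_{T,\alpha}(X)<0\}$ is handled symmetrically with $\beta\uparrow\alpha$, and $\{M_{T,\alpha}(X)=0\}$ is null by Lemma \ref{BM_as_nonzero_alpha_quantile}. This argument never needs to place the random level $M_{T,\alpha}(X)$ relative to the local maxima of the path, which is exactly where yours stalls.
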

\begin{proof} Of course $X_0=0$ a.s. Since $X$ has a.s. continuous paths, then $\beta \mapsto M_{T,\beta}(X)$ is a.s. strictly increasing continuous by Proposition \ref{BM_joint_continuity_occupation_time} and Lemma \ref{continuity_set_continuous_paths}. On $\{M_{T,\alpha}(X)>0\}$, since the function $[0,\infty)\ni\nu \mapsto \inf\{t>0:\sup_{s\leq t}(\gamma \cdot X)_s\geq \nu\}$ is nondecreasing and left-continuous, we have $\lim_{\beta \uparrow \alpha}\tau_{M_{T,\beta}}(X)=\tau_{M_{T,\alpha}}(X)$. Now note, for $\beta>\alpha$, that:
	\begin{eqnarray}&P(\{\tau_{M_{T,\beta}}(X)>\tau_{M_{T,\alpha}}(X)+\varepsilon\}\cap\{M_{T,\alpha}(X)>0\})\nonumber \\
	&\leq  \varepsilon^{-1}(E[\mathbf{1}_{\{M_{T,\alpha}(X)>0\}}\tau_{M_{T,\beta}}(X)]-E[\mathbf{1}_{\{M_{T,\alpha}(X)>0\}}\tau_{M_{T,\alpha}}(X)])\nonumber \\
	&\leq \varepsilon^{-1}(E[\mathbf{1}_{\{M_{T,\beta}(X)>0\}}\tau_{M_{T,\beta}}(X)]-E[\mathbf{1}_{\{M_{T,\alpha}(X)>0\}}\tau_{M_{T,\alpha}}(X)])
	\end{eqnarray}
	and by (\cite{Dassios_quantileBMstopping}; Remark.1. p. 32) we have by Tonelli-Fubini (without loss of generality $T=1$ and $\gamma \cdot X$ has unit volatility parameter)
	\begin{eqnarray}E[\mathbf{1}_{\{M_{1,\beta}(X)>0\}}\tau_{M_{1,\beta}}(X)]&=&\int_{{\mathbb{R}^+}}\int_{(0,\beta]}u\frac{b}{\pi \sqrt{u^3(1-u)}}e^{-b^2/(2u)}dudb\nonumber\\
	&=&\int_{(0,\beta]}\frac{1}{\pi}\sqrt{\frac{u}{1-u}}du\nonumber\\
	&=&\frac{1}{\pi}(\arcsin(\sqrt{\beta})-\sqrt{(1-\beta)\beta})
	\end{eqnarray}
	But this function is continuous in $\beta$. So $E[\mathbf{1}_{\{M_{T,\beta}(X)>0\}}\tau_{M_{T,\beta}}(X)]\to E[\mathbf{1}_{\{M_{T,\alpha}(X)>0\}}\tau_{M_{T,\alpha}}(X)]$ and so $P(\{\tau_{M_{T,\beta}}(X)>\tau_{M_{T,\alpha}}(X)+\varepsilon\}\cap\{M_{T,\alpha}(X)>0\})\to 0,\,\forall \varepsilon >0$. Therefore since $\nu\mapsto \inf\{t>0:\sup_{s\leq t}(\gamma \cdot x)_s\geq \nu\}$ is nondecreasing left-continuous we have
	\begin{equation}P(\{\lim_{\beta \downarrow \alpha}\tau_{M_{T,\beta}}(X)>\tau_{M_{T,\alpha}}(X)+\varepsilon\}\cap\{M_{T,\alpha}(X)>0\})=0,\forall \varepsilon >0\end{equation}
	which implies, as $\varepsilon \downarrow 0$ that
	\begin{equation}P(\{\lim_{\beta \downarrow \alpha}\tau_{M_{T,\beta}}(X)>\tau_{M_{T,\alpha}}(X)\}\cap\{M_{T,\alpha}(X)>0\})=0
	\end{equation}
	and therefore $P(\{\lim_{\beta \downarrow \alpha}\tau_{M_{T,\beta}}(X)\neq \tau_{M_{T,\alpha}}(X)\}\cap\{M_{T,\alpha}(X)>0\})=0$. We then conclude that $\lim_{\beta \to \alpha}\tau_{M_{T,\beta}}(X)=\tau_{M_{T,\alpha}}(X)$ on $\{M_{T,\alpha}(X)>0\}$ with probability one. A symmetric argument holds on $\{M_{T,\alpha}(X)<0\}$: indeed for $\beta<\alpha$ (and so $M_{T,\alpha}<0\implies M_{T,\beta}<0$) we again have
	\begin{eqnarray}&P(\{\tau_{M_{T,\beta}}(X)>\tau_{M_{T,\alpha}}(X)+\varepsilon\}\cap\{M_{T,\alpha}(X)<0\})\nonumber \\
	&\leq \varepsilon^{-1}(E[\mathbf{1}_{\{M_{T,\beta}(X)<0\}}\tau_{M_{T,\beta}}(X)]-E[\mathbf{1}_{\{M_{T,\alpha}(X)<0\}}\tau_{M_{T,\alpha}}(X)])
	\end{eqnarray}	
	and we argue similarly as above. Therefore we conclude since $P(M_{T,\alpha}(X)=0)=0$ by Lemma \ref{BM_as_nonzero_alpha_quantile}.
\end{proof}

\textcolor{black}{
	Ideally, we would like to extend the results of this section to processes which generalise Brownian motion, such as SDEs of the form $dY_t=b(Y_t)dW_t$ where $b$ is a $d\times d$ time-homogeneous volatility matrix. However, the direct extension of, in particular, Proposition \ref{BM_continuity_random_times} does not appear immediate, since explicit distributional results of $(M_{T,\alpha}(Y),\tau_{M_{T,\alpha}}(Y))$ in such case are not - to the best of our knowledge - known in the literature. However, we conjecture that the equivalent of Theorem \ref{weak_convergence_BM_alpha_quantile_time} for $Y$ is true under standard technical assumptions on $b$.
}

\end{document}